\documentclass[12pt,reqno]{amsart}
\usepackage{amssymb}
\usepackage{color}
\usepackage[all]{xy}
\usepackage{amsmath}

 \rm

\renewcommand{\(}{\left(}
\renewcommand{\)}{\right)}
\renewcommand{\[}{\left[}
\renewcommand{\]}{\right]}

\renewcommand{\>}{\rangle}

\renewcommand{\bar}{\overline}
\newcommand{\abs}[1]{\left\lvert#1\right\rvert}
\newcommand{\norm}[1]{\left\lVert#1\right\rVert}
\newcommand{\st}{\:|\:}

\newcommand{\C}{{\mathbb{C}}}
\newcommand{\R}{{\mathbb{R}}}
\newcommand{\Z}{{\mathbb{Z}}}
\newcommand{\N}{{\mathbb{N}}}

\renewcommand{\phi}{\varphi}

\renewcommand{\Re}{{\mathrm{Re}}}

\newcommand{\supp}{{\mathrm{supp}}}

\theoremstyle{plain}
\newtheorem{thm}{Theorem}[section]
\newtheorem{lem}[thm]{Lemma}

\newtheorem{conj}[thm]{Conjecture}

\theoremstyle{definition}

\theoremstyle{remark}
\newtheorem{rem}[thm]{Remark}

\title[Gaussian decay for the Harmonic oscillator]
{Gaussian decay for the Harmonic oscillator}

\author{Manish Chaurasia}

\address{Department of Mathematics, 
Visvesvaraya National Institute of Technology (VNIT), 
Nagpur 440010, India}
\email{manish2700c@gmail.com}
\subjclass[2010]{Primary 42B37, 42C05; Secondary 33C45, 35Q41}
\keywords{Uncertainty principle, Hardy's theorem, 
Harmonic oscillator, Schr\"odinger equations}

\begin{document}

\begin{abstract}
We consider the Schrödinger equation associated with the harmonic oscillator and show that if the initial data and its Fourier transform are dominated by Gaussian functions of widths $a>0$ and $b>0$, respectively, satisfying $ab<1$, then the evolved solution and its Fourier transform are dominated by a Gaussian of width $\frac{1}{2}\(\frac{1}{a}+\frac{1}{b}-
\sqrt{\(\frac{1}{a}+\frac{1}{b}\)^2-4}\),$ for all times except for a discrete set, and for all times in one dimension. In the one-dimensional case, we prove that these estimates are sharp. Moreover, for a more restrictive class of initial data, we establish sharper time-dependent Gaussian bounds.
\end{abstract}

\maketitle

\section{Introduction}
The {\em Fourier transform} of $f \in L^1(\R^d)$ is defined by
\begin{equation*}
\mathcal{F}f(\xi) = 
\widehat f(\xi) =  \int_{\R^d} f(x) e^{-2\pi i\xi\cdot x} \, dx.
\end{equation*}
For $a,b>0$, let $g_a(x) = e^{-a\pi\abs{x}^2}$ be a Gaussian of width $a$, 
and let
\begin{equation*}
E^d(a,b) = \{ f \in L^1(\R^d) \st
\abs{f(x)} \le C g_a(x) \text{ and } \abs{\mathcal{F}f(\xi)} \le C g_b(\xi)
\quad\text{for some $C \in \R$} \}.
\end{equation*}
These spaces are called {\it Hardy class}, see 
\cite{Folland-Sitaram, Garg2009, Chaurasia-2024}.
A fundamental result concerning these spaces is Hardy's classical uncertainty principle \cite{Hardy}.
\begin{thm}\label{T:hardy}
If $ab > 1$ then $E^d(a,b) = \{0\}$.  
If $ab=1$ then $E^d(a,b) = \C g_a$.
If $ab < 1$ then $\dim E^d(a,b) = \infty$.
\end{thm}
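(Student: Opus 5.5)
I would prove Theorem~\ref{T:hardy} by first reducing to dimension one and then using complex analysis, specifically a Phragm\'en--Lindel\"of argument.

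\emph{Case $ab<1$.} Here the plan is constructive. Hermite-type functions $x^\alpha g_c(x)$ have Fourier transforms $P_\alpha(\xi) g_{1/c}(\xi)$ with $P_\alpha$ a polynomial, because $\widehat{g_c} = c^{-d/2} g_{1/c}$ and multiplication by $x_j$ corresponds to $\frac{i}{2\pi}\partial_{\xi_j}$. I choose $c$ with $a<c<1/b$, which is possible since $ab<1$. Then $\abs{x^\alpha g_c(x)}\le C_\alpha g_a(x)$ and $\abs{P_\alpha g_{1/c}}\le C_\alpha g_b$. The functions $x^\alpha g_c$ are linearly independent, so $\dim E^d(a,b)=\infty$.

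\emph{Reduction for $ab\ge1$.} By shrinking $a$ or $b$ (the classes decrease as the widths grow), it suffices to treat $ab=1$. Rescaling $f(x)\mapsto f(\lambda x)$ lets me further assume $a=b=1$. For $d>1$ I will fix $\omega$ in the unit sphere and consider the one-variable functions obtained by restricting $\widehat f$ to lines $\R\omega$. These are Fourier transforms of the Radon projections $R_\omega f(t)=\int_{x\cdot\omega=t} f$, and $\abs{R_\omega f(t)}\le C g_a(t)$ holds after integrating out the orthogonal Gaussian. So $R_\omega f\in E^1(a,b)$. Once I know $R_\omega f = c(\omega) g_1$, it follows that $\widehat f(s\omega)=c(\omega)e^{-\pi s^2}$. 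Continuity at $s=0$ makes $c(\omega)=\widehat f(0)$ constant, hence $\widehat f = c\,g_1$.

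\emph{One-dimensional core, $a=b=1$.} Since $f$ has Gaussian decay, $F(z)=\widehat f(z)$ extends to an entire function with $\abs{F(x+iy)}\le C e^{\pi y^2}$, by completing the square in $\int g_1(x)e^{2\pi \abs{y x}}dx$. The hypothesis also gives $\abs{F(x)}\le Ce^{-\pi x^2}$ on the real axis. Let $G(z)=e^{\pi z^2}F(z)$. Then $G$ is entire of order at most $2$, bounded on $\R$, and on $i\R$ it is bounded by $C e^{-\pi y^2}e^{\pi y^2}=C$.

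The main obstacle is concluding that $G$ is constant, since $G$ has order $2$ and the quadrants are of opening exactly $\pi/2$, the critical case for Phragm\'en--Lindel\"of. I would follow the standard trick and work in the first quadrant. For $\delta>0$ and a small angular perturbation, consider $G_\epsilon(z)=G(z)\exp(i\epsilon e^{-i\epsilon/2} z^{2})$ on the sector $0\le\arg z\le \pi/2-\epsilon$ type regions, or more simply $G(z)e^{i\epsilon z^2}$. On each boundary ray this gives a bound times a decaying factor, and inside, the growth is $e^{(\pi+\epsilon')\abs{z}^2}$ on a sector of opening $<\pi/2$. Phragm\'en--Lindel\"of then bounds $G_\epsilon$ by $C$, and letting $\epsilon\to0$ shows $G$ is bounded in each quadrant. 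By Liouville, $G$ is constant, so $\widehat f=c\,g_1$ and $f=c\,g_1$.

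For $ab>1$, I reduce to $ab=1$ with the same $a$ and $b'=1/a<b$, which gives $f=cg_a$. But $\widehat{g_a}$ is not $O(g_b)$ when $b>1/a$, so $c=0$. The converse inclusion $\C g_a\subset E^d(a,1/a)$ is immediate.
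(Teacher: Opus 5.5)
The paper does not prove this theorem; it quotes it from Hardy. Your proposal therefore has to stand on its own.

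Your route is Hardy's classical complex-analytic argument, and the surrounding steps are correct:
\begin{itemize}
\item the family $x^\alpha g_c$ with $a<c<1/b$ when $ab<1$;
\item the monotonicity and scaling reductions to $a=b=1$;
\item the Fourier-slice reduction to $d=1$, with $c(\omega)=\widehat f(0)$ read off at $s=0$;
\item the disposal of $ab>1$ through $E^d(a,b)\subset E^d(a,1/a)$.
\end{itemize}

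The gap is in the Phragm\'en--Lindel\"of step, which carries the whole content of the theorem. As written it does not close, for two reasons.

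\emph{The properties you record for $G$ are not enough.} Being entire of order $2$, bounded on $\R$ and on $i\R$, with growth $e^{\pi|z|^2}$, does not force constancy. For example, $e^{-i\pi z^2}$ has modulus $1$ on both axes and modulus $e^{2\pi xy}$ in the first quadrant. What must be used is the sharper bound $|G(x+iy)|\le Ce^{\pi x^2}$, i.e.\ $|G(Re^{i\theta})|\le Ce^{\pi R^2\cos^2\theta}$. You already have it from your estimate on $F$ but never invoke it, and it is the only source of a bound on the slanted edge of your sector.

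\emph{Your explicit multipliers fail.}
\begin{itemize}
\item $\exp(i\epsilon e^{-i\epsilon/2}z^2)$ has modulus $e^{\epsilon\sin(\epsilon/2)x^2}$ on $\R_+$, where $G$ is merely bounded, so the product is not bounded there.
\item For $G(z)e^{i\epsilon z^2}$ on $0\le\arg z\le\pi/2-\epsilon$, the sharper bound gives only $Ce^{R^2(\pi\sin^2\epsilon-\epsilon\sin 2\epsilon)}$ on the upper edge. That exponent is positive for small $\epsilon$, since $\pi>2$.
\end{itemize}

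The fix is to match the coefficient to the aperture. On $S_\delta=\{0\le\arg z\le\pi/2-\delta\}$ put $G_\delta(z)=G(z)e^{i\beta z^2}$ with $\beta=\frac{\pi}{2}\tan\delta$. Then:
\begin{itemize}
\item $|G_\delta|\le C$ on $\R_+$;
\item on the upper edge, $|G_\delta|\le Ce^{R^2(\pi\sin^2\delta-\beta\sin2\delta)}=C$;
\item inside, $|G_\delta(z)|\le Ce^{\pi|z|^2}$, in a sector of aperture strictly less than $\pi/2$.
\end{itemize}
Phragm\'en--Lindel\"of therefore gives $|G(z)|\le Ce^{2\beta xy}$ on $S_\delta$, with $C$ independent of $\delta$. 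Letting $\delta\to0$ bounds $G$ on the first quadrant. The other quadrants follow by applying the same argument to $G(-z)$, $\overline{G(\bar z)}$ and $\overline{G(-\bar z)}$, which satisfy the same hypotheses. With this repair, Liouville finishes the proof as you say.
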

The Hardy uncertainty principle was first studied in the context of evolution equations 
by Escauriaza-Kenig-Ponce-Vega in \cite{Escauriaza.et.al.-2006}. Subsequently, 
they published a series of papers in this direction, establishing several remarkable 
estimates for solutions of evolution equations; see, for example, 
\cite{Cowling.et.al.-2010, Escauriaza.et.al.-2008, 
 Escauriaza.et.al.-2016, Escauriaza.et.al.-2010}. 
In \cite{Cowling.et.al.-2010},  
Cowling-Escauriaza-Kenig-Ponce-Vega 
provided a real-variable proof of the Hardy uncertainty principle. 
Related studies for Schr\"odinger equations associated with electromagnetic 
Hamiltonians and related operators can be found in 
\cite{Barcelo.et.al.-2013, Cassano-Fanelli-2015, Cassano-Fanelli-2017}.


Here, we are interested in the following Cauchy problem for the Schr\"odinger equation
associated with harmonic oscillator.
\begin{equation}\label{E:HSE}
\begin{aligned}
\frac{1}{i} \frac{\partial u(x,t)}{\partial t} 
=&\; (-\Delta+4\pi^2\abs{x}^2) u(x,t)\\
 u(x,0) =&\; u_0(x)\in E^d(a,b)
\end{aligned}
.
\end{equation}
For $n\in\N$, we define the $n$-th Hermite function by
\begin{equation*}
h_n(x) = \frac{(-1)^n}{\sqrt{2^n\pi^{1/2}n!}} e^{x^2/2} 
\frac{d^n}{dx^n} e^{-x^2}.
\end{equation*}
The Hermite functions are the eigenfunctions of the harmonic oscillator. 
It is therefore natural to expect that the theory of Hermite functions plays 
a central role in understanding the above problem.
Indeed, in \cite{Vemuri2008hermite}, Vemuri proved that 
if 
$u_0\in E^1(\tanh 2\alpha, \tanh 2\alpha)$, for $\alpha>0$, then 
\begin{equation*}\label{E:Vemuri-estimate}
\abs{\<u_0, h_n\>}\lesssim n^{-1/4} e^{-n\alpha}.
\end{equation*}
 Certain higher-dimensional analogues of this result were proved in \cite{Garg2009}.
 Also, a refinement of the result was subsequently obtained in \cite{Chaurasia-2024}. 
 Using this estimate, it was shown in \cite{Vemuri2008hermite} that 
the evolution $u(x,t)$ of $u_0$ by (\ref{E:HSE}) satisfies
$$u(x,t)\in E^1(\tanh \gamma, \tanh \gamma),$$ 
for $\gamma<\alpha$. However, the author conjectured that
the above belonging should hold for $\gamma=\alpha$.
In the recent past, Kulikov-Oliveira-Ramos made substantial progress 
on this conjecture, see \cite{Kulikov.et.al-2024}. 
They proved the following result.
\begin{thm}\label{T:KOR}
Let $u(x,t)$ be the solution of the Cauchy problem (\ref{E:HSE}) with 
the initial data $u_0$.
If $u_0\in E^d(\tanh 2\alpha, \tanh 2\alpha)$, for some $\alpha>0$,
then $u(x,t)\in  E^d(\tanh \alpha, \tanh \alpha)$ whenever
$t\notin \{\frac{1}{16}+\frac{k}{8}, k \in \Z \}$.
\end{thm}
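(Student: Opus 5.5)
The plan is to write the harmonic oscillator propagator explicitly through the Mehler kernel, and then turn the statement into a Hardy-type problem for a Gaussian-modulated function. With $H=-\Delta+4\pi^2|x|^2$ the Hermite functions $h_n(\sqrt{2\pi}x)$ are eigenfunctions with eigenvalues $2\pi(2n+1)$. Hence $u(\cdot,t)=e^{itH}u_0$ is a metaplectic operator whose phase is rotation of the time–frequency plane by angle $\theta=4\pi t$; the sign depends on conventions and is irrelevant below. For $\sin\theta\neq0$ the Mehler formula gives
\begin{equation*}
u(x,t)=c_\theta\, e^{\pi i\cot\theta\,|x|^2}\int_{\R^d} e^{-2\pi i x\cdot y/\sin\theta}\, e^{\pi i\cot\theta\,|y|^2}\,u_0(y)\,dy .
\end{equation*}
Up to constants, this says $|u(x,t)|=|\sin\theta|^{-d/2}\,\bigl|\mathcal F(e^{\pi i\cot\theta|\cdot|^2}u_0)(x/\sin\theta)\bigr|$. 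Since $\mathcal F$ commutes with $e^{itH}$ up to a unimodular factor, the bound for $\widehat{u}(\cdot,t)$ follows from the one for $u$ applied to $\widehat u_0$, which lies in the same symmetric class. So it suffices to bound $|u(x,t)|$.

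The key step is a quantitative Hardy estimate, used in the style of Theorem~\ref{T:hardy}. Put $a=\tanh2\alpha$ and write $u_0=e^{-\pi a'|y|^2}$ times a bounded factor. The function $\phi(y)=e^{\pi i\cot\theta|y|^2}u_0(y)$ obeys $|\phi|\le Cg_a$. Its Fourier transform $\mathcal F\phi=\widehat u_0 * \mathcal F(e^{\pi i\cot\theta|\cdot|^2})$ is a convolution with a chirp. Because $|\widehat u_0|\le Cg_a$ as well, $\widehat u_0$ extends to an entire function of order two. The same holds for $\mathcal F\phi$, and its growth can be controlled on complex lines. The interpolation I would aim for is a Phragmén–Lindelöf argument in the variable $z\in\C^d$. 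Work with the entire function
\begin{equation*}
F(z)=\int e^{-2\pi i z\cdot y/\sin\theta+\pi i\cot\theta |y|^2}u_0(y)\,dy .
\end{equation*}
On the real axis, $F$ is controlled by the decay of $u_0$ after Gaussian completion of squares. Along the rotated direction of angle $\theta$, $F$ is controlled by the decay of $\widehat u_0$, since rotation by $\theta$ composed with rotation by $-\theta$ is the identity. One then applies Phragmén–Lindelöf in the sector between these two directions, after multiplying by a suitable Gaussian $e^{\pi\beta z^2}$. Optimizing $\beta$ should give the width $\tanh\alpha$. This matches the abstract formula $\frac12\bigl(\frac1a+\frac1b-\sqrt{(\frac1a+\frac1b)^2-4}\bigr)$, which equals $\tanh\alpha$ when $a=b=\tanh2\alpha$, since $\frac{2}{\tanh2\alpha}=\tanh\alpha+\frac1{\tanh\alpha}$. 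To reduce to one variable in dimension $d$, I would restrict to complex lines $z=\zeta\omega$ with $\omega\in S^{d-1}$.

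The main obstacle is the Phragmén–Lindelöf step. It needs the function to be of at most order two with controlled type inside the sector, and the sector opening depends on $\theta$. When $\theta$ is a multiple of $\pi/2$, i.e. $t\in\frac18\Z$, the formula degenerates. At those times $u$ is just $u_0$, $\widehat u_0$, or their reflections, so the claim is trivial there, even at the stronger width $\tanh2\alpha\ge\tanh\alpha$. The delicate times are those where the sector has angle exactly $\pi/2$ relative to the Gaussian direction, namely $\theta\equiv\pi/4 \pmod{\pi/2}$, i.e. $t=\frac1{16}+\frac k8$. There Phragmén–Lindelöf with order-two functions is borderline and fails to give the endpoint bound, which is why these times are excluded. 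For other $t$ I would first try a direct three-lines/Hadamard argument in the angle variable: interpolate the logarithm of the Gaussian type between the angles $0$ and $\pi/2$, where it is known. Then I would check that $\tanh\alpha$ is a uniform lower bound for the interpolated width in every interval avoiding the exceptional angles.
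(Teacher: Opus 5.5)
Your starting point (complexify the Mehler/fractional Fourier formula, restrict to complex lines $z=\zeta\omega$, apply Phragm\'en--Lindel\"of) is a legitimate alternative to the paper's route. However, the key step as written fails, because the boundary data you feed into it do not exist.

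On $\R$, $F$ is, up to a chirp and a constant, $u(\cdot,t)$ itself, so ``on the real axis $F$ is controlled by the decay of $u_0$'' is circular. What $\abs{u_0}\le Cg_a$ actually gives is a global growth bound for the entire extension $U$ of $u(\cdot,t)=c\,\mathcal F_\theta u_0$. What $\abs{\widehat u_0}\le Cg_a$ gives, via $u(\cdot,t)=c'\,\mathcal F_{\theta-\pi/2}\widehat u_0$, is a second one. For $\zeta=\xi+i\eta$, $\omega\in S^{d-1}$, $a=\tanh2\alpha$ and $0<\theta<\pi/2$, these read
\begin{equation*}
\log\abs{U(\zeta\omega)}\le C+\pi\Big(\frac{\eta^2}{a\sin^2\theta}-2\xi\eta\cot\theta\Big),
\qquad
\log\abs{U(\zeta\omega)}\le C+\pi\Big(\frac{\eta^2}{a\cos^2\theta}+2\xi\eta\tan\theta\Big).
\end{equation*}
Both reduce to mere boundedness on $\R$. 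Nor is there a complex ray, $e^{i\theta}\R$ or any other, on which $F$ is a chirp of $\widehat u_0$. The rotation by $\theta$ acts on phase space; it becomes a rotation of the complex variable only after a Bargmann transform.

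So your sector between the directions $0$ and $\theta$ has no boundary bounds, and the decisive step, ``optimizing $\beta$ should give $\tanh\alpha$'', is never carried out. The identity $2/\tanh2\alpha=\tanh\alpha+1/\tanh\alpha$ only confirms the target value. The surrounding claims also fail:
\begin{itemize}
\item Your sector has opening $\theta$, which equals $\pi/2$ at $\theta=\pi/2$, not at $\pi/4$. So it does not explain the exceptional times.
\item A three-lines interpolation in the angle between the equal widths $\tanh2\alpha$ at $\theta=0$ and $\theta=\pi/2$ would give width $\tanh2\alpha$ at $\theta=\pi/4$. The paper's explicit Gaussian, with $\abs{u(x,\frac1{16})}=C_2g_{\tanh\alpha}$ when $a=b$, rules this out.
\end{itemize}

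The idea can be repaired, and then it differs substantially from the paper's argument. Each bound above gives decay on only one side of $\R$: the first for small $\arg\zeta>0$, the second for small $\arg\zeta<0$. Take the sector $-\arctan(s\cot\theta)\le\arg\zeta\le\arctan(s\tan\theta)$ with $s=\tanh\alpha$. Use the first bound on the upper ray and the second on the lower ray. Divide $U(\zeta\omega)$ by $e^{\pi\lambda\zeta^2}$, with $\lambda\in\C$ chosen so that $\pi\Re(\lambda\zeta^2)$ matches the exponent on each ray; a real multiplier $e^{\pi\beta z^2}$ does not suffice in general. A short computation gives $\Re\lambda=-\tanh\alpha$ for every such $\theta$. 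The opening is $<\pi/2$ because $s^2<1$. Hence Phragm\'en--Lindel\"of applies with no borderline and yields $\abs{u(x\omega,t)}\lesssim e^{-\pi\tanh\alpha\,x^2}$ uniformly in $\omega$. At $\theta=\pi/4$ the rays are simply $\arg\zeta=\pm\arctan(\tanh\alpha)$.

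The paper instead follows Kulikov--Oliveira--Ramos:
\begin{enumerate}
\item The lens transform (\ref{E:harmonic-to-free}) and the transform (\ref{E:v1-v2}) reduce to free Schr\"odinger data at times $0$ and $1$.
\item Lemma~\ref{L:L2-evolution} gives Gaussian $L^2$ weights at intermediate times.
\item Minimizing $\Omega_{a,b,\epsilon}$ locates the worst time $t_c$, which for $a=b$ gives $\frac1{16}+\frac k8$.
\item Theorem~\ref{T:Lp-to-Linfty} upgrades to pointwise bounds with an $\epsilon$-loss.
\end{enumerate}
Those times are excluded because the $L^2$ weight there leaves no room above $\tanh\alpha$ to absorb that loss. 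They are an artifact of the $L^2$ method, not of a Phragm\'en--Lindel\"of borderline. The $L^2$ route needs only weighted $L^2$ information. The pointwise complex-analytic route, done as above, avoids the $\epsilon$-loss and excludes no $t$.
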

Recently, Radchenko and Ramos settled Vemuri's conjecture
in dimension one, see \cite{Radchenko-Ramos-2025}. Their
result read as follows.
\begin{thm}\label{T:Main-Radchenko-Ramos}
Let $d=1$. 
Let $u(x,t)$ be the solution of the Cauchy problem (\ref{E:HSE}) with 
the initial data $u_0$.
If $u_0\in E^1(\tanh 2\alpha, \tanh 2\alpha)$, for some $\alpha>0$,
then $u(x,t)\in  E^1(\tanh \alpha, \tanh \alpha)$. 
\end{thm}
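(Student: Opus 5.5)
The plan is to convert the problem into one about an entire function, using the Bargmann transform. On that side the harmonic oscillator flow becomes a rotation, and what is needed is a Phragmén--Lindelöf argument in the quadrants. We then remove the exceptional times of Theorem~\ref{T:KOR} by exploiting one-dimensional information on the Hermite coefficients.

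\textbf{Step 1 (reduction to rotations).} Expand $u_0=\sum_n c_n \phi_n$ with $\phi_n(x)=(2\pi)^{1/4}h_n(\sqrt{2\pi}x)$. Since $(-\Delta+4\pi^2x^2)\phi_n=2\pi(2n+1)\phi_n$, we get
\begin{equation*}
u(\cdot,t)=e^{-2\pi i t}\sum_n e^{-4\pi i n t}c_n\phi_n .
\end{equation*}
Let $\mathcal B$ denote the Bargmann transform, normalised so that $\mathcal B\phi_n(z)=z^n/\sqrt{n!}$ and $\mathcal B(\widehat f)(z)=\mathcal Bf(-iz)$. Put $F=\mathcal Bu_0$. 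Then $\mathcal B u(\cdot,t)(z)=e^{-2\pi i t}F(e^{-4\pi i t}z)$, so the evolution is the rotation by angle $\theta=4\pi t$.

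We also need a dictionary for Gaussian decay. A pointwise bound $|f|\lesssim g_a$ is equivalent, up to harmless polynomial factors, to a bound of the form
\begin{equation*}
|\mathcal Bf(x+iy)|\lesssim \exp\bigl(\pi(\kappa(a) x^2+ y^2)\bigr),\qquad \kappa(a)<1 \text{ explicit},
\end{equation*}
where the second variable is controlled by the Fourier side. Half of this is obtained by inserting the Gaussian bound into the integral defining $\mathcal B$. The other half is obtained by inverting $\mathcal B$ on horizontal lines. So membership in $E^1(a,a)$ with $a=\tanh 2\alpha$ gives such a bound along the real axis, and the same bound along the imaginary axis by applying it to $\widehat{u_0}$. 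The target $E^1(\tanh\alpha,\tanh\alpha)$ for $u(\cdot,t)$ amounts to the bound
\begin{equation*}
|F(z)|\lesssim \exp\bigl(\tfrac{\pi}{2}(1+e^{-2\alpha})|z|^2\bigr)\cdot(\text{correction})
\end{equation*}
in the rotated real and imaginary directions, for every rotation angle.

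\textbf{Step 2 (Phragmén--Lindelöf in quadrants).} In each quadrant, multiply $F$ by $\exp(-\beta z^2)$ with a complex $\beta$ chosen so that the resulting function has equal, optimal exponential type on both bounding rays. A priori $F$ has order $2$ and finite type, and each quadrant has opening $\pi/2$. Phragmén--Lindelöf for order $2$ in an angle $\pi/2$ is therefore borderline. It does work on every ray strictly inside the quadrant when the constants are slightly relaxed, and this reproduces Theorem~\ref{T:KOR} away from the diagonals $\theta=\pi/4+k\pi/2$, i.e. $t=\frac1{16}+\frac k8$. I would take this part essentially from \cite{Kulikov.et.al-2024}.

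\textbf{Step 3 (the diagonal directions; the main obstacle).} At the critical angle, Phragmén--Lindelöf fails without an extra gain, and supplying that gain is the heart of the proof. In one dimension the extra input is Vemuri's coefficient estimate $|c_n|\lesssim n^{-1/4}e^{-n\alpha}$, or its refinement from \cite{Chaurasia-2024}. It gives the isotropic bound
\begin{equation*}
|F(z)|\le\sum_n n^{-1/4}e^{-n\alpha}\frac{|z|^n}{\sqrt{n!}}\lesssim |z|^{-c}\exp\bigl(\tfrac12 e^{-2\alpha}|z|^2\bigr),
\end{equation*}
which carries a polynomial gain. Here $c>0$ is whatever power the Laplace-type sum yields; the point is only that a negative power of $|z|$ appears.

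I would feed this gain into the quadrant argument. Take the auxiliary function $G(z)=F(z)e^{-\beta z^2}(z+1)^{\gamma}$ with $\gamma$ small. On the two axes $G$ is bounded by Step 1. Inside the sector, the isotropic bound makes $G$ of order $2$ but of \emph{minimal type} relative to the critical sector. That is exactly the hypothesis under which Phragmén--Lindelöf survives in angle $\pi/2$, and it yields $|G|\lesssim 1$ on the diagonal.

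Two things must be checked. First, the choice of $\beta$ at the critical angle must match exactly the width $\tanh\alpha$. Second, the polynomial factor must be absorbed; if Vemuri's gain turns out insufficient, I would instead use the sharper asymptotics of \cite{Chaurasia-2024}. Finally, translating the diagonal bound back through Step 1 (applied also to $\widehat{u}$, which is a further rotation by $\pi/2$) gives $u(\cdot,t)\in E^1(\tanh\alpha,\tanh\alpha)$ for all $t$.
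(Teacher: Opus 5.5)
There is a genuine gap, and it sits in Step 3, the only place where the critical times $t=\frac1{16}+\frac k8$ are handled.

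\textbf{The polynomial gain does not exist.} With $\lambda=e^{-2\alpha}|z|^2$ one has $e^{-n\alpha}|z|^n/\sqrt{n!}=e^{\lambda/2}\sqrt{p_n}$, where $p_n=e^{-\lambda}\lambda^n/n!$. For large $\lambda$, $\sum_n\sqrt{p_n}\asymp\lambda^{1/4}$, with its mass at $n=\lambda+O(\sqrt\lambda)$. The weight $n^{-1/4}$ exactly cancels this factor, so $\sum_n n^{-1/4}e^{-n\alpha}|z|^n/\sqrt{n!}\asymp\exp(\frac12e^{-2\alpha}|z|^2)$, i.e.\ $c=0$.

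\textbf{The Phragm\'en--Lindel\"of step cannot be set up.} Step 1 gives $|F(x)|,|F(ix)|\lesssim e^{\frac12e^{-4\alpha}x^2}$ for real $x$. To conclude that $G=Fe^{-\beta z^2}(z+1)^\gamma$ is bounded on both axes you would need $\Re\beta\ge\frac12e^{-4\alpha}$ and $\Re\beta\le-\frac12e^{-4\alpha}$ at the same time. Moreover, the isotropic bound only gives $\log|G(re^{i\phi})|\le(\frac12e^{-2\alpha}-\Re(\beta e^{2i\phi}))r^2+O(\log r)$. In a sector of opening $\pi/2$, $2\phi$ sweeps an interval of length $\pi$, so $\Re(\beta e^{2i\phi})\le0$ somewhere, and there the type is at least $\frac12e^{-2\alpha}>0$. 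Minimal type is therefore never available. This is exactly the order-$2$/opening-$\pi/2$ resonance, and a factor $(z+1)^\gamma$ does not affect it.

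\textbf{Ray bounds do not give pointwise decay.} Even a bound on the diagonal ray would not yield the pointwise estimate. The chirped Gaussian with $F=e^{i\rho z^2/2}$, $0<\rho<1$, has $|F|=1$ on both axes. Yet $|u_0|$ is a multiple of $g_{(1-\rho^2)/(1+\rho^2)}$, whose width tends to $0$ as $\rho\to1$. The inversion half of your dictionary needs control of $F_t$ on entire vertical lines $\Re z=x_0$, not on rotated axes.

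\textbf{What actually closes the argument.} The isotropic bound with $c=0$ already proves the theorem for every $t$, with no Phragm\'en--Lindel\"of argument and no exceptional set. In your normalization, $F(z)e^{z^2/2}=2^{1/4}\int_\R u_0(x)e^{-\pi x^2+2\sqrt\pi xz}\,dx$. Fourier inversion along $\Re z=x_0$, applied to $F_t(z)=F(e^{-4\pi it}z)$, gives
\begin{equation*}
|u(x,t)|\,e^{-\pi x^2+2\sqrt\pi x_0x}\lesssim e^{x_0^2/2}\int_\R|F_t(x_0+iy)|\,e^{-y^2/2}\,dy\lesssim e^{(1+e^{-2\alpha})x_0^2/2}.
\end{equation*}
The $y$-integral converges because $e^{-2\alpha}<1$. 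The constants are uniform in $t$ because the isotropic bound is rotation invariant. Choosing $x_0=2\sqrt\pi x/(1+e^{-2\alpha})$ yields $|u(x,t)|\lesssim e^{-\pi\tanh(\alpha)x^2}$. The same holds for $\widehat u(\cdot,t)$, whose transform $F_t(-iz)$ satisfies the same bound.

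\textbf{Comparison with the paper.} This is the Bargmann-side counterpart of the route the paper attributes to Radchenko--Ramos and reproduces in its proof of Theorem~\ref{T:1d-evol}. There one bounds $|u(x,t)|\le\sum_n|\langle u_0,h_n\rangle|\,|h_n(\sqrt{2\pi}x)|$, discarding the phases. One then applies Theorem~\ref{T:Radchenko-Ramos} with $\kappa=1$, $\beta=\frac14$, $y=\alpha$; the exponent $1-\frac\kappa2-2\beta=0$ plays the role of your $c=0$. In both routes the uniformity in $t$ comes from dropping the phases before estimating, not from any analysis at the critical angle.
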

In their proof, along with the estimate (\ref{E:Vemuri-estimate}), 
they established and used the following result. 
\begin{thm}\label{T:Radchenko-Ramos}
Let $\kappa>0$ and $\beta\in\R$. Then, for any $y>0$
\begin{equation}\label{E:WHE}
\sum_{n\ge1} \frac{e^{-\kappa n y}}{n^{\beta}} \abs{h_n(x)}^{\kappa}
\lesssim_{y,\kappa,\beta} \abs{x}^{1-\frac{\kappa}{2}-2\beta} 
e^{-\kappa x^2 \tanh(y)/2}, \quad \forall x\in\R\setminus [-1,1],
\end{equation}
and in addition if $\frac{\kappa}{2}+2\beta \ge 1$, then the estimate 
holds for all $x\in\R\setminus \{0\}$. Moreover, the estimate (\ref{E:WHE})
is sharp. 
\end{thm}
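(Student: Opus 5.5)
The natural route is through the Mehler-type generating function and Plancherel–Rotach asymptotics for $h_n$ in the oscillatory and exponential regimes. Write $\abs{h_n(x)}^{\kappa}$ with $h_n$ normalized as above, so the Gaussian $e^{-x^2/2}$ profile and the turning point $\abs{x}\approx\sqrt{2n+1}$ govern the behaviour. For fixed large $\abs{x}$, split the sum into three ranges: (i) $n \le x^2/2 - C\abs{x}^{2/3}$, the exponentially decaying (forbidden) region; (ii) the transition region $\abs{2n+1-x^2}\lesssim \abs{x}^{2/3}$; (iii) $n\ge x^2/2 + C\abs{x}^{2/3}$, the oscillatory region. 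In range (iii) use the uniform bound $\abs{h_n(x)}\lesssim n^{-1/4}(1-x^2/(2n+1))^{-1/4}$. In range (ii) use the Airy bound $\abs{h_n(x)}\lesssim n^{-1/12}$. In range (i) use the Plancherel–Rotach formula
\begin{equation*}
\abs{h_n(x)}\approx (x^2-2n-1)^{-1/4}\exp\Bigl(-\tfrac{1}{2}\bigl(\abs{x}\sqrt{x^2-2n-1}-(2n+1)\operatorname{arccosh}\tfrac{\abs{x}}{\sqrt{2n+1}}\bigr)\Bigr).
\end{equation*}

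The key step is range (i). Set $n=s x^2/2$ with $s\in(0,1)$. The summand then becomes $\exp(-x^2\,\Phi(s))$ times algebraic factors, where
\begin{equation*}
\Phi(s)=\tfrac{\kappa}{2}\Bigl(\sqrt{1-s}-s\operatorname{arccosh}(s^{-1/2})\Bigr)+\tfrac{\kappa y s}{2}.
\end{equation*}
Minimize $\Phi$ over $s\in[0,1]$. Differentiating, the critical point satisfies $\operatorname{arccosh}(s^{-1/2})=y$, that is, $s=\operatorname{sech}^2 y$. At this point $\sqrt{1-s}=\tanh y$ and the $s\,y$ terms cancel, so $\Phi_{\min}=\tfrac{\kappa}{2}\tanh y$. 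This is exactly the claimed exponent $e^{-\kappa x^2\tanh(y)/2}$. It also shows that the oscillatory and transition ranges, which sit where $e^{-\kappa n y}$ already gives $e^{-\kappa y x^2/2}$ with $y>\tanh y$, contribute lower-order terms.

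The polynomial prefactor comes from a Laplace-method evaluation around $n_0=x^2\operatorname{sech}^2(y)/2$. The second derivative of the phase in $n$ is of order $x^{-2}$, so the effective window has width $\asymp\abs{x}$. The summand carries the factor $n_0^{-\beta}(x^2-2n_0)^{-\kappa/4}\asymp \abs{x}^{-2\beta-\kappa/2}$. Multiplying by the window width gives $\abs{x}^{1-\kappa/2-2\beta}$. Ranges (ii) and (iii) are bounded crudely and shown to be exponentially smaller, since $y>\tanh y$ strictly.

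For small $\abs{x}$, when $\tfrac{\kappa}{2}+2\beta\ge1$, the right side is bounded below near $0$. The left side is bounded there by the uniform estimate $\abs{h_n}\lesssim n^{-1/12}$ together with the exponential damping $e^{-\kappa n y}$, which settles the claim on $\R\setminus\{0\}$.

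Sharpness follows by restricting the sum to the window $\abs{n-n_0}\le\abs{x}$, where the Plancherel–Rotach asymptotic is two-sided and non-oscillatory in the forbidden region. This yields the matching lower bound.

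The main obstacle is making the Plancherel–Rotach asymptotics uniform enough that the Laplace argument is rigorous. This includes controlling the error near the turning point so that the transition range contributes no worse than the main term. I would first try the uniform Airy-type asymptotics of Olver to handle ranges (i) and (ii) simultaneously.
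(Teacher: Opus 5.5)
The paper gives no proof of this statement. It is imported from Radchenko--Ramos \cite{Radchenko-Ramos-2025} and used only as a black box (with $\kappa=1$, $\beta=1/4$) in the proof of Theorem~\ref{T:1d-evol}, so there is no in-paper argument to compare your route with. On its own merits, your outline is correct.

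\textbf{Checks.} Write $N=2n+1$ and $F(N)=\abs{x}\sqrt{x^2-N}-N\operatorname{arccosh}(\abs{x}/\sqrt N)$. Then $F'(N)=-\operatorname{arccosh}(\abs{x}/\sqrt N)$, and the exponent $\tfrac{\kappa}{2}F(N)+\kappa n y$ equals $x^2\Phi(N/x^2)-\kappa y/2$ exactly. So the shift from $2n$ to $2n+1$ costs only a bounded factor.
\begin{itemize}
\item Your critical point $s_0=\operatorname{sech}^2 y$ and minimum value $\tfrac{\kappa}{2}\tanh y$ are right.
\item $\Phi''(s)=\kappa/(4s\sqrt{1-s})\ge\kappa/4$ gives the quadratic growth needed for the Laplace step at scale $\abs{x}$ in $n$.
\item The count $\abs{x}\cdot\abs{x}^{-2\beta}\cdot\abs{x}^{-\kappa/2}$ is correct.
\item The lower bound (sharpness) is legitimate: on the window, $\abs{x}/\sqrt N$ stays near $\cosh y$, where the classical Plancherel--Rotach formula is two-sided with relative error $O(1/n)$.
\end{itemize}
You leave two small points implicit. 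For bounded $n$, no asymptotics in $n$ apply, but $\abs{h_n(x)}\lesssim_n \abs{x}^n e^{-x^2/2}$ together with $\tanh y<1$ makes those terms negligible. The compact range $1<\abs{x}\le X_0$ is trivial.

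\textbf{A simpler way around the uniformity issue.} The obstacle you flag can be removed without Olver's Airy-type expansions. Since $h_n''=(x^2-N)h_n$, $h_n$ has no zeros on $(\sqrt N,\infty)$ and $\abs{h_n}$ is convex there. Hence $w=-h_n'/h_n>0$ on that interval, and $w'=w^2-(x^2-N)$. If $w(x_1)<\sqrt{x_1^2-N}$ for some $x_1$, then $w$ decreases and stays $\le w(x_1)$ on $[x_1,\infty)$. This forces $\abs{h_n(x)}\ge\abs{h_n(x_1)}e^{-w(x_1)(x-x_1)}$, contradicting Gaussian decay. Hence, for every $n$ and $\abs{x}\ge\sqrt N$,
\begin{equation*}
\abs{h_n(x)}\le\abs{h_n(\sqrt N)}\,e^{-F(N)/2}\lesssim e^{-F(N)/2},
\qquad\text{i.e.}\qquad
e^{-\kappa n y}\abs{h_n(x)}^{\kappa}\lesssim e^{-x^2\Phi(N/x^2)} .
\end{equation*}
This bound has the exact exponential rate; only its prefactor is lossy. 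The three regimes are then handled as follows.
\begin{itemize}
\item Where $\abs{N/x^2-s_0}\ge\delta$ and $N\le x^2$, the bound suffices by convexity of $\Phi$.
\item For $N>x^2$, use $\sup_n\norm{h_n}_\infty<\infty$ and $y>\tanh y$.
\item On $\abs{N/x^2-s_0}\le\delta$, the classical, non-uniform Plancherel--Rotach asymptotics (uniform for $\operatorname{arccosh}(\abs{x}/\sqrt N)$ in a compact subset of $(0,\infty)$) give the sharp $(x^2-N)^{-1/4}$ prefactor from both sides.
\end{itemize}
With this, your transition and oscillatory ranges and the turning-point analysis become unnecessary. Your Olver-based plan also works; it just carries more machinery than needed.
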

Analogues results of Theorem \ref{T:Main-Radchenko-Ramos} 
and \ref{T:Radchenko-Ramos} for certain weighted Hardy class can be found in 
\cite{Achar2026Hermite}.
In this paper, we are interested in understanding
 the behavior of solutions to \eqref{E:HSE} 
corresponding to initial data $u_0\in E^d(a,b)$ when $ab<1$. 

Let $ab<1$. We denote $m=\min\{a,b\}$. First, we note 
that if $u_0\in E^d(a,b)$, then $u_0\in E^d(m,m)$. Therefore,
Theorem \ref{T:KOR} and \ref{T:Main-Radchenko-Ramos} imply that
\begin{equation*}
u(x,t) \in E^d\(\frac{1-\sqrt{1-m^2}}{m}, \frac{1-\sqrt{1-m^2}}{m}\)
\end{equation*}
for suitable set of times. This naturally raises the question of whether a sharper estimate 
for $u(x,t)$ can be obtained. More precisely, 
does there exists a function $\omega(a,b)$ such that 
$$\frac{1-\sqrt{1-m^2}}{m}<\omega(a,b)$$
and 
$$u(x,t)\in E^d(\omega(a,b), \omega(a,b))?$$ 
Here, we answer this question affirmatively.

\subsection{Statements of the main results.}
 In order to state the results, we first
define a few quantities.
From now on, we shall always assume that $ab<1$, unless stated otherwise. 
In that case, we have 
\begin{equation}\label{E:criteria}
\frac{1}{a} + \frac{1}{b} >2.
\end{equation}
Therefore, we can define the following parameters, which will be crucial for us:
\begin{equation}\label{E:v-function}
\begin{aligned}
\omega(a,b) =&\; \frac{1}{2}\(\frac{1}{a}+\frac{1}{b}-
\sqrt{\(\frac{1}{a}+\frac{1}{b}\)^2-4}\),\\
t_c(a,b) =&\; \frac{1}{8\pi } \sqrt{\frac{ab}{1-ab}}
\(\frac{1}{a}-\frac{1}{b}+
\sqrt{\(\frac{1}{a}+\frac{1}{b}\)^2-4}\), \quad\text{and}\\
l_{\pm} (a,b) =&\; \frac{-\sqrt{ab(1+\sqrt{ab})}\pm ab\sqrt{2\sqrt{ab}}}
{b\sqrt{1-\sqrt{ab}}},
\end{aligned}
\end{equation}
where $l_+$, and $l_-$ are right hand side quantities with $+$, and $-$ sign 
respectively. 
Also, we define the following functions:
\begin{equation}\label{E:omega-lambda}
\Omega_{a,b}(t) = \frac{2a b (1+16\pi^2t^2)}
{a+8\pi\sqrt{ab(1-ab)}t+16\pi^2bt^2}, \quad\text{and}\quad
\Lambda_{a,b}(t) = \frac{2\sqrt{ab}(1+16\pi^2t^2)}{a+16\pi^2bt^2} 
\quad t\in\R.
\end{equation}
Our first result is the following one, which generalizes Theorem \ref{T:KOR}.
\begin{thm}\label{T:Asym-evolution}
Let $a,b>0$, and $ab<1$. 
Let $\omega$ and $t_c$ be as in equation \eqref{E:v-function}.
Let $u(x,t)$ be the solution of the Cauchy problem (\ref{E:HSE}) with 
the initial data $u_0$.
If $u_0\in E^d(a,b)$, then 
$$
u(x,t)\in E^d(\omega(a,b),\omega(a,b)), 
$$
whenever 
$t\notin \Big\{\frac{\arctan (4\pi t_c(a,b))}{4\pi}
+\frac{k}{8}\st k \in 2\Z \Big\}
\cup\Big\{\frac{\arctan (4\pi t_c(b,a))}{4\pi}
+\frac{k}{8}\st k \in \Z\setminus2\Z \Big\}$.
\end{thm}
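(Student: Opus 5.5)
The idea is to reduce Theorem \ref{T:Asym-evolution} to Theorem \ref{T:KOR} using the metaplectic structure of the problem. Write $H=-\Delta+4\pi^2\abs{x}^2$. Then $e^{-itH}$ is, up to a unimodular constant, the fractional Fourier transform $\mathcal{F}_\theta$ of angle $\theta=4\pi t$, where $\mathcal{F}_{\pi/2}=\mathcal{F}$ up to a constant. Hence $\mathcal{F}u(\cdot,t)$ is the evolution at time $t+\frac18$. So it suffices to show: for all admissible $t$, $\abs{u(x,t)}\lesssim g_{\omega}(x)$, with the constant allowed to depend on $t$. This bound at $t$ and at $t+\frac18$ gives membership in $E^d(\omega,\omega)$. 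The two exceptional families in the statement, with the roles of $a$ and $b$ swapped for odd $k$, are exactly what this doubling of times produces.

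The first step is to normalize the widths. Put $\lambda=(a/b)^{1/4}$ and $v_0(x)=u_0(x/\lambda)$, or the analogous dilation $D_\lambda$. Then $v_0\in E^d(s,s)$ with $s=\sqrt{ab}<1$, so $v_0\in E^d(\tanh2\alpha,\tanh2\alpha)$ with $\tanh 2\alpha=\sqrt{ab}$. Now $u(\cdot,t)=\mathcal{F}_{\theta}D_\lambda^{-1}v_0$. At the level of $Sp(2d,\R)$ (acting diagonally, so coordinatewise as $SL(2,\R)$), I will decompose the matrix $R_\theta\,\mathrm{diag}(\lambda^{-1},\lambda)$ in Cartan ($KAK$) form:
\[
R_\theta\,\mathrm{diag}(\lambda^{-1},\lambda)=\mathrm{diag}(\mu^{-1},\mu)\,R_{\phi}\,\mathrm{diag}(\nu^{-1},\nu)\,R_{\psi},
\]
or a variant better adapted to the problem. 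The right-hand factors should be arranged so that the rotation $R_\phi$ acts on a function lying in a symmetric Hardy class. By Theorem \ref{T:KOR}, this rotation sends $E^d(\tanh2\alpha,\tanh2\alpha)$ into $E^d(\tanh\alpha,\tanh\alpha)$ unless $\phi/(4\pi)\in\frac1{16}+\frac18\Z$. Dilations then act explicitly: $D_\mu$ maps $E^d(c,c)$ into $E^d(c\mu^2,c\mu^{-2})\subset E^d(m,m)$ with $m=c\min(\mu^2,\mu^{-2})$. The cleaner route is probably to treat only the pointwise bound $\abs{u(x,t)}$. Here the pointwise Gaussian bound of $\mathcal{F}_\phi v_0$, followed by a dilation, directly gives a Gaussian bound with an explicit width $W(t)$.

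Second, compute $W(t)$ and check that $\inf_t W(t)\ge\omega(a,b)$ for all non-exceptional $t$. The extremal case is governed by Gaussians. For $f=e^{-\pi z\abs{x}^2}$, the evolution is $e^{-\pi z_t\abs{x}^2}$ with $z_t$ given by the Möbius map
\[
z_t=\frac{z\cos\theta+i\sin\theta}{iz\sin\theta+\cos\theta}.
\]
Minimizing $\Re z_t$ over the relevant family should produce the quadratic equation $\omega^2-(\frac1a+\frac1b)\omega+1=0$, whose smaller root is $\omega(a,b)$. Using $\tanh\alpha=\frac{1-\sqrt{1-s^2}}{s}$, I expect the same quadratic to come out of the decomposition: the worst $\mu$ is precisely the one balancing the two dilation widths. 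The critical angle $\phi$ at which Theorem \ref{T:KOR} fails corresponds to $\tan\theta=4\pi t_c(a,b)$. This requires solving the $KAK$ relation for $\theta$ given $\phi=\frac{\pi}{4}$ (mod $\frac{\pi}{2}$), which I expect to produce the formula for $t_c$ in \eqref{E:v-function}.

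The main obstacle is the second step: making the decomposition canonical enough to yield the sharp width $\omega$ uniformly in $t$ and not merely for each $t$, and identifying the exceptional times exactly. If the $KAK$ bookkeeping becomes unwieldy, I will instead work directly with the Hermite expansion. In this fallback I expand $v_0$ in dilated Hermite functions, use Vemuri's coefficient bound $n^{-1/4}e^{-n\alpha}$ in the $d=1$ case, and take tensor products for general $d$, as in Kulikov--Oliveira--Ramos. I then write $u(\cdot,t)$ as a Mehler-kernel integral against $v_0$ and estimate it by completing the square in the Gaussian exponent. The exceptional times are then those where the completed square degenerates, i.e.\ where the Mehler kernel combined with the dilation ceases to be a nondegenerate Gaussian in the relevant variable.
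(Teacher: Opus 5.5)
There is a genuine gap. Reducing to pointwise bounds at $t$ and $t+\frac18$ is fine, and the paper uses the same device. But routing the argument through Theorem \ref{T:KOR} cannot give the width $\omega(a,b)$, so the inequality $\inf_t W(t)\ge\omega(a,b)$ that you plan to check in your second step is false whenever $a\ne b$.

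After normalizing to $v_0\in E^d(s,s)$ with $s=\sqrt{ab}$, your only nontrivial input is that $\mathcal{F}_\phi v_0$ has width $\tanh\alpha$ (where $\tanh2\alpha=s$), the same for every $\phi$. Dilations, chirps and $\mathcal{F}$ only rescale this. Inserting a further rotation $R_\psi$ before $R_\phi$ either needs the non-symmetric statement you are proving or lowers the width further. So your factorization is in effect the Iwasawa one,
\begin{equation*}
R_\theta\,\mathrm{diag}(\lambda^{-1},\lambda)=N\,\mathrm{diag}(\mu^{-1},\mu)\,R_\phi ,
\end{equation*}
with $N$ a chirp and $\tan\phi=\lambda^{\pm2}\tan\theta$. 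As $t\to0$ one has $\phi\to0$ and the dilation tends to $D_\lambda^{-1}$. Hence $W(t)\to\lambda^2\tanh\alpha=\sqrt{a/b}\,\tanh\alpha=\frac{1-\sqrt{1-ab}}{b}$. With $J(x)=x+\frac1x$ as in Lemma \ref{L:sigma-less-a},
\begin{equation*}
J\Big(\frac{1-\sqrt{1-ab}}{b}\Big)-J\big(\omega(a,b)\big)=\sqrt{1-ab}\,\Big(\frac1a-\frac1b\Big).
\end{equation*}
So for $a<b$ this limit lies strictly below $\omega(a,b)$; for $a=\frac12$, $b=\frac32$ it is $\frac13$, while $\omega\approx0.451$. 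For $a>b$ the same happens near $t=\pm\frac18$, where the limit is $\frac{1-\sqrt{1-ab}}{a}$.

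The loss is structural. Theorem \ref{T:KOR} records only the worst-angle width and forgets that for $\phi$ near $0$ (and near $\frac{\pi}{2}$) the width of $\mathcal{F}_\phi v_0$ is close to $s$, not $\tanh\alpha$. That is exactly the information needed after undoing the dilation. For the same reason your exceptional set does not come out right. $\phi\in\frac\pi4+\frac\pi2\Z$ means $\abs{\tan4\pi t}\in\{\sqrt{a/b},\sqrt{b/a}\}$, i.e.\ $\{1/\sqrt3,\sqrt3\}$ in the example, whereas $4\pi t_c(a,b)\approx2.68$ and $4\pi t_c(b,a)\approx0.37$.

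Your fallback does not repair this. The evolution is diagonal only in the undilated Hermite basis. By the same factorization, $e^{-itH}D_\lambda^{-1}h_n$ equals $D_\mu h_n$ up to a chirp and an $n$-dependent unimodular factor. So bounding $\abs{u(x,t)}\le\sum_n\abs{\langle v_0,h_n\rangle}\abs{D_\mu h_n(x)}$ with Vemuri's bound reproduces exactly the same lossy $W(t)$. The sharp Hermite argument needs the undilated bound \eqref{E:Chaurasia-estimate}, together with Theorem \ref{T:Radchenko-Ramos}. Its rate $A^{1/2}$, with $\frac{1-A}{1+A}=\omega(a,b)$, already encodes the asymmetry. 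That is the paper's proof of Theorem \ref{T:1d-evol}, and it is one-dimensional: a general element of $E^d(a,b)$ is not a tensor product.

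Completing the square in the Mehler integral gives nothing either. The kernel of $e^{-itH}$ has modulus independent of $(x,y)$, so inserting $\abs{u_0}\le Cg_a$ yields no decay. The Gaussian decay of $u(\cdot,t)$ comes from cancellation, which only the joint hypothesis on $u_0$ and $\widehat{u_0}$ controls.

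The missing idea is to feed that joint hypothesis, with its two different widths, directly into a two-endpoint estimate instead of symmetrizing first. The paper uses \eqref{E:harmonic-to-free} and \eqref{E:v1-v2} to pass to a free Schr\"odinger solution whose values at times $0$ and $1$ are, up to chirps, conjugation and rescaling, $\widehat{u_0}$ and $u_0$. It then applies the Escauriaza--Kenig--Ponce--Vega log-convexity bound (Lemma \ref{L:L2-evolution}) with $\alpha=2/\sqrt{a_\epsilon}$ and $\beta=2/\sqrt{b_\epsilon}$. This gives a time-dependent Gaussian $L^2$ weight whose minimum over time is $\omega(a,b)$, attained only at $t_c$. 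Off the exceptional set there is a strict margin above $\omega(a,b)$ that absorbs the $\epsilon$-loss in Theorem \ref{T:Lp-to-Linfty}. The exceptional times are precisely where that margin vanishes, not where Theorem \ref{T:KOR} fails.
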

It is shown in \cite{Chaurasia-2025} that if $u_0\in E^1(a,b)$, then 
\begin{equation}\label{E:Chaurasia-estimate}
\abs{\<u_0, h_n\>}\lesssim n^{-1/4} \(\frac{a+b-2ab}{a+b+2ab}\)^{n/4}.
\end{equation}
We shall use the above estimate and Theorem \ref{T:Radchenko-Ramos}
to establish the following generalization of the 
Theorem \ref{T:Main-Radchenko-Ramos}.
\begin{thm}\label{T:1d-evol}
Let $d=1$. Let $a,b>0$, and $ab<1$. 
Let $\omega$ be as in equation \eqref{E:v-function}.
Let $u(x,t)$ be the solution of the Cauchy problem 
(\ref{E:HSE}) with the initial data $u_0$.
If $u_0\in E^1(a,b)$, then 
$$
u(x,t)\in E^1(\omega(a,b),\omega(a,b)) \quad t\in\R.
$$
Moreover, the belonging is sharp.
\end{thm}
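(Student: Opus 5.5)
The plan is to expand $u_0$ in the eigenfunctions of the harmonic oscillator and then control the evolved series with the Hermite coefficient estimate \eqref{E:Chaurasia-estimate} together with Theorem \ref{T:Radchenko-Ramos}.

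\textbf{Hermite expansion.} The normalized eigenfunctions of $-\Delta+4\pi^2x^2$ are $\phi_n(x)=(2\pi)^{1/4}h_n(\sqrt{2\pi}\,x)$, with eigenvalues $2\pi(2n+1)$. Write $u_0=\sum_n c_n\phi_n$. The solution of \eqref{E:HSE} is then
\begin{equation*}
u(x,t)=\sum_n c_n e^{2\pi i(2n+1)t}\phi_n(x).
\end{equation*}
(The sign of the phase is irrelevant, since only moduli will be used.) The estimate \eqref{E:Chaurasia-estimate}, applied after the dilation $x\mapsto\sqrt{2\pi}x$, gives $\abs{c_n}\lesssim n^{-1/4}e^{-ny}$, where
\begin{equation*}
e^{-4y}=r:=\frac{a+b-2ab}{a+b+2ab}.
\end{equation*}
Because $\abs{e^{2\pi i(2n+1)t}}=1$, this yields, uniformly in $t$,
\begin{equation*}
\abs{u(x,t)}\lesssim \sum_n n^{-1/4}e^{-ny}\abs{h_n(\sqrt{2\pi}x)}.
\end{equation*}

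\textbf{Applying Theorem \ref{T:Radchenko-Ramos}.} Take $\kappa=1$ and $\beta=1/4$. Then $\frac{\kappa}{2}+2\beta=1$, so the estimate holds for all $x\neq0$, and the power of $\abs{x}$ is $\abs{x}^{0}$. This gives
\begin{equation*}
\abs{u(x,t)}\lesssim e^{-\pi x^2\tanh y}, \qquad x\neq 0.
\end{equation*}
Near $x=0$ the series is bounded, since $\abs{h_n}\le C$ and $\sum_n\abs{c_n}<\infty$. It remains to identify $\tanh y$. We have
\begin{equation*}
\tanh 2y=\frac{1-r}{1+r}=\frac{2ab}{a+b}=\frac{2}{S},\qquad S=\frac1a+\frac1b .
\end{equation*}
Writing $T=\tanh y$, the double-angle formula $2T/(1+T^2)=2/S$ gives $T^2-ST+1=0$. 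The root with $T<1$ is $T=\frac12\bigl(S-\sqrt{S^2-4}\bigr)=\omega(a,b)$. This is the step where the shape of $\omega$ appears. As a check, the case $a=b=\tanh2\alpha$ recovers Theorem \ref{T:Main-Radchenko-Ramos}.

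\textbf{Fourier side.} The $h_n$ are eigenfunctions of $\mathcal F$ with eigenvalues of modulus one. Hence $\mathcal F u(\cdot,t)$ has coefficients of the same modulus as those of $u(\cdot,t)$, and the identical bound gives $\abs{\mathcal Fu(\xi,t)}\lesssim e^{-\pi\omega\xi^2}$. Alternatively, $\mathcal Fu(\cdot,t)$ is the evolution of $\mathcal Fu_0\in E^1(b,a)$, and $r$ is symmetric in $a,b$. This proves $u(\cdot,t)\in E^1(\omega,\omega)$ for every $t\in\R$.

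\textbf{Sharpness.} I would use explicit complex Gaussians $u_0=e^{-\pi zx^2}$ with $\Re z>0$. For these, $\abs{u_0}=g_{\Re z}$ and $\abs{\mathcal Fu_0}\propto g_{\Re(1/z)}$. Choose $\Re z=a$ and $\Re(1/z)=b$, which means $\abs{z}^2=a/b$; this is possible exactly because $ab\le1$. By Mehler's formula the evolution stays a Gaussian $e^{-\pi z_tx^2}$. In the Cayley variable $w=(1-z)/(1+z)$ the flow is a rotation, $w_t=e^{-i\theta}w$ with $\theta$ proportional to $t$, so $\abs{w}$ is conserved. At the time when $w_t$ is purely imaginary, one has $\abs{z_t}=1$. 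Then $\Re z_t=\Re(1/z_t)=\frac{1-\abs{w}^2}{1+\abs{w}^2}$, and I expect this to equal $\omega(a,b)$ by a short computation from $\Re z=a$, $\Re(1/z)=b$. At that time both $u$ and $\mathcal F u$ are exactly Gaussians of width $\omega$, so $\omega$ cannot be increased.

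The main obstacle is this final algebraic identification in the sharpness step. The upper bound is essentially a direct combination of the two quoted results once the identity $\tanh y=\omega$ is in hand.
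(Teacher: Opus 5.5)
Your upper bound is the same as the paper's: Hermite expansion, the coefficient bound \eqref{E:Chaurasia-estimate}, Theorem \ref{T:Radchenko-Ramos} with $\kappa=1$, $\beta=1/4$, and the Fourier side via the modulus-one eigenvalues. Your double-angle derivation of $\tanh y=\omega(a,b)$ is correct.

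The gap is in the sharpness step. The ``short computation'' you expect is false, and the time you pick is the wrong one. From $\Re z=a$ and $\abs{z}^2=a/b$ you get $\abs{w}^2=\frac{1-2a+a/b}{1+2a+a/b}=r$. So at the time when $w_t$ is purely imaginary, the common width is
\begin{equation*}
\frac{1-\abs{w}^2}{1+\abs{w}^2}=\frac{1-r}{1+r}=\frac{2ab}{a+b}=\tanh 2y .
\end{equation*}
This is the harmonic mean of $a$ and $b$, not $\tanh y=\omega(a,b)$; for $a=b$ it is just $a$, the initial width. Since $\omega(a,b)<\min\{a,b\}\le\frac{2ab}{a+b}$, nothing is proved at that time.

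In fact no time on this orbit makes $u$ and $\mathcal F u$ simultaneously Gaussians of width $\omega$. Using $\Re z=\frac{1-\abs{w}^2}{\abs{1+w}^2}$ and $\Re(1/z)=\frac{1-\abs{w}^2}{\abs{1-w}^2}$, one gets, for all $t$,
\begin{equation*}
\frac{1}{\Re z_t}+\frac{1}{\Re(1/z_t)}=\frac{2(1+\abs{w}^2)}{1-\abs{w}^2}=\frac1a+\frac1b .
\end{equation*}
Simultaneous width $\omega$ would instead force this quantity to equal $2/\omega=S+\sqrt{S^2-4}>S$.

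The repair is to drop the requirement that both sides be extremal: sharpness only needs $u(\cdot,t)$ itself to have width exactly $\omega$. Along the orbit, $\Re z_t=\frac{1-\abs{w}^2}{\abs{1+w_t}^2}$ is minimized when $w_t=\abs{w}$ is real and positive. There $z_t=\frac{1-\sqrt r}{1+\sqrt r}=\omega(a,b)$ is real, so $\abs{u(x,t)}=C g_{\omega(a,b)}(x)$. At that time $\mathcal F u(\cdot,t)$ has width $1/\omega$, saturating Hardy. Hence $u(\cdot,t)\notin E^1(\gamma,\gamma)$ for every $\gamma>\omega$.

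This is exactly the paper's example. Its explicit Gaussian solution has exponent $\frac{1+iAe^{4\pi i(2t-\tau/2)}}{1-iAe^{4\pi i(2t-\tau/2)}}$ with $A=\sqrt r$. At $t=\frac{1}{16}+\frac{\tau}{4}$ this becomes the real number $\frac{1-A}{1+A}=\omega(a,b)$.
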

\begin{rem}
Note that $\frac{1-\sqrt{1-m^2}}{m}<\omega(a,b)$.
\end{rem}
Clearly, uniform Gaussian bounds for solutions do not capture their full behavior. 
This motivates the study of time-dependent Gaussian bounds for the evolution. 
In \cite{Kulikov.et.al-2024}, this problem was considered for initial data in 
$E^1(\tanh 2\alpha,\tanh 2\alpha)$, for $\alpha>0$. 
Here, we study time-dependent 
Gaussian bounds for solutions of \eqref{E:HSE} with initial data in $E^1(a,b)$
satisfying a suitable compatibility condition with the following Gaussian.
\begin{equation*}
G_{a,b}(x) =  e^{- \frac{\pi}{b}\(ab+i\sqrt{ab(1-ab)}\)\abs{x}^2}.
\end{equation*}
This Gaussian was used in \cite{Chaurasia-2025} to saturate the 
estimate (\ref{E:Chaurasia-estimate}). 
Indeed, we shall prove the 
following result.
\begin{thm}\label{T:time-bound}
Let $a,b>0$, and $ab<1$. 
Let $\Omega_{a,b}$ and $\Lambda_{a,b}$ be as in \eqref{E:omega-lambda}.
Let 
$$L_{a,b} = \Big\{\(v,w\)\subset\R^2\st 0\le v, w, 0\le vw \le 1, 
aw=bv\Big\}.$$
 Let $u(x,t)$ be the solution of the Cauchy problem 
(\ref{E:HSE}) with the initial data $u_0$, and 
\begin{equation*}
u_0(x) = \int_{L_{a,b}} G_{v,w}(x) d\mu(v,w),
\end{equation*}
for some measure $\mu$. If $u_0\in E^1(a,b)$, then
$$
u(x,t) \in E^1\(\Xi_{a,b}(s),\Xi_{a,b}(s)\),
$$
where $s=-\frac{\tan4\pi t}{4\pi}$, $t\in\R$, and
\begin{equation*}
\Xi_{a,b}(y) = 
\begin{cases}
\Omega_{a,b}(y)  \quad y\in\R\setminus (l_-(a,b)+\frac{k}{4}, l_+(a,b)+\frac{k}{4})\\ 
\Lambda_{a,b}(y)  \quad y\in(l_-(a,b)+\frac{k}{4}, l_+(a,b)+\frac{k}{4})
\end{cases}
k\in\Z.
\end{equation*}
\end{thm}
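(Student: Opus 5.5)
The plan is to compute the evolution of each Gaussian $G_{v,w}$ in closed form and then integrate over $\mu$. The Schr\"odinger propagator of $H=-\Delta+4\pi^2\abs{x}^2$ (Mehler kernel) maps a complex Gaussian $e^{-\pi z x^2}$ with $\Re z>0$ to another complex Gaussian $c(t)e^{-\pi z(t) x^2}$. Here $z(t)$ is obtained from $z$ by a M\"obius transformation. With the normalisation of \eqref{E:HSE}, the evolution is the metaplectic rotation by angle $4\pi t$, so
$$z(t)=\frac{z\cos 4\pi t + i\sin 4\pi t}{i z\sin 4\pi t+\cos 4\pi t},$$
up to sign conventions that I will fix by checking against $z=1$, which is stationary.

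Dividing numerator and denominator by $\cos 4\pi t$ expresses everything through $\tan 4\pi t$, that is, through $s=-\frac{\tan 4\pi t}{4\pi}$. This explains why the bound is stated in $s$ and has period $\frac14$ in the shifted intervals. It also means I never need to exclude special times: when $\cos 4\pi t=0$ the formula degenerates to the Fourier-transform case, which is covered by continuity or a separate direct computation.

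For $z=\frac1w\(vw+i\sqrt{vw(1-vw)}\)$ with $aw=bv$, write $v=\lambda a$ and $w=\lambda b$ with $0\le \lambda^2 ab\le 1$. The next step is to compute $\Re z(t)$ explicitly as a function of $\lambda$ and $s$. The rational functions $\Omega_{a,b}(s)$ and $\Lambda_{a,b}(s)$ in \eqref{E:omega-lambda} should then appear as follows:
\begin{itemize}
\item $\Omega_{a,b}(s)$ is $\Re z(t)$ at the endpoint $\lambda=1$, i.e.\ for $G_{a,b}$ itself; the term $8\pi\sqrt{ab(1-ab)}\,s$ comes from the imaginary part of $z$.
\item $\Lambda_{a,b}(s)$ is the infimum of $\Re z(t)$ over interior $\lambda$, attained at the critical point where $\lambda^2 ab$ makes $\sqrt{vw(1-vw)}$ optimise the denominator.
\end{itemize}
Concretely, I will minimise $\Re z_\lambda(t)$ over $\lambda\in[0,(ab)^{-1/2}]$. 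The interior critical point lies in the admissible range exactly when $s$ is in $(l_-,l_+)$ modulo $\frac14$. Solving the boundary condition, a quadratic in $s$ with roots involving $\sqrt{ab}$ and $\sqrt{1-\sqrt{ab}}$, should reproduce the formulas for $l_\pm$ in \eqref{E:v-function}. Outside that range the minimum is at $\lambda=1$.

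One restriction is needed before this minimisation. The hypothesis $u_0\in E^1(a,b)$ forces $\mu$ to be supported on $\lambda\le1$ in an averaged sense, but not pointwise; smaller $\lambda$ gives wider Gaussians. The cleanest route is to bound $\abs{u(x,t)}\le\int \abs{c_\lambda(t)}e^{-\pi\Re z_\lambda(t)x^2}d\abs{\mu}$ and then use $\Re z_\lambda(t)\ge\Xi_{a,b}(s)$. This requires that the part of $\mu$ with small $\lambda$ is already constrained. I expect this step to be the main obstacle.

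My first approach to it is to exploit the decay hypothesis on $u_0$ and $\widehat{u_0}$. Both are superpositions of Gaussians of real parts $\lambda a$ and $\lambda' b$, and testing against $x\to\infty$ (a Laplace-type argument) shows that $\abs{\mu}$ gives no mass to $\lambda<1$, i.e.\ $\mu$ is concentrated on $\lambda=1$ together with the trivial range. If that fails, I would instead show that $\Re z_\lambda(t)$ is monotone in $\lambda$ on $[1,(ab)^{-1/2}]$ away from the critical window, and restrict to $\lambda\ge1$. The remaining points are then routine:
\begin{itemize}
\item the factor $\abs{c_\lambda(t)}$ is uniformly bounded, using Cauchy--Schwarz and $\Re z\ge \lambda a$;
\item the same argument applied to $\widehat u$ gives the Fourier side, since $\mathcal F$ is the evolution at $t=\frac18$ up to a phase, so $\widehat{u}(\cdot,t)=u(\cdot,t+\frac18)$ up to a constant, and $\Xi$ is evaluated at the shifted $s$, consistent with the $\frac k4$ periodicity.
\end{itemize}
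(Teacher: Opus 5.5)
There is a genuine gap at the step you flag yourself, and a second error in where $\Lambda_{a,b}$ comes from.

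\emph{The support of $\mu$.} ``Testing against $x\to\infty$'' does not show that $\mu$ has no mass on $\lambda<1$. The measure $\mu$ is complex and the Gaussians carry $\lambda$-dependent chirps, so the slowly decaying components can cancel. A Laplace-method argument would need control of $\mu$ near the smallest $\lambda$ in its support, which you do not have. Your fallback, monotonicity on $[1,(ab)^{-1/2}]$, presupposes the restriction rather than proving it.

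The missing observation is that your exponents $z_\lambda=\lambda a+i\sqrt{a/b}\,\sqrt{1-\lambda^2ab}$ all lie on the circle $\abs{z}=\sqrt{a/b}$, in the closed first quadrant. Pushing $\mu$ forward gives $u_0(x)=\mathcal{L}\nu(\pi\abs{x}^2)$ with $\nu$ supported on that arc. The paper then uses a rigidity lemma for Laplace transforms of measures on circles (Lemma~\ref{L:supp-lem}, a rescaling of Lemma 4.2 of Kulikov--Oliveira--Ramos): if $-r\notin\supp\nu$ and $\abs{\mathcal{L}\nu(t)}\lesssim e^{-ct}$, then $\supp\nu\subset\{\Re z\ge c\}$. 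With $c=a$ this gives $\supp\mu\subset\{a\le v\le\sqrt{a/b}\}$, i.e.\ $1\le\lambda\le(ab)^{-1/2}$. It does not give concentration at $\lambda=1$.

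This is genuinely complex-analytic and geometric input, not a Laplace heuristic. Take the measure $dz$ on an arc $\gamma$ of $\abs{z}=r$ through $-r$, with endpoints $z_0,z_1$. Then $\int_\gamma e^{-tz}\,dz=(e^{-tz_0}-e^{-tz_1})/t$, which sees only the endpoints and not the smallest real part on the support.

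\emph{Where $\Lambda_{a,b}$ comes from.} $\Lambda_{a,b}(s)$ is not an interior critical value. It is the value at the other endpoint $\lambda=(ab)^{-1/2}$, where $vw=1$ and $G_{v,w}=e^{-\pi\sqrt{a/b}\abs{x}^2}$; substitute $vw=1$ into $\Omega_{v,w}$ to see this.

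To check the shape in $\lambda$, write $p=\lambda^2ab$, $A=a+16\pi^2bs^2$ and $B=8\pi\sqrt{ab}\,s$. Then $\Omega_{\lambda a,\lambda b}(s)=2\sqrt{ab}\,(1+16\pi^2s^2)\sqrt{p}/(A+B\sqrt{1-p})$. The $p$-derivative of its logarithm has the sign of $A\sqrt{1-p}+B$, which is decreasing in $p$. So any interior critical point is a maximum. The minimum over $[1,(ab)^{-1/2}]$ is therefore $\min\{\Omega_{a,b}(s),\Lambda_{a,b}(s)\}$, which is exactly the paper's ``maximum of the two exponentials''.

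Accordingly, $(l_-,l_+)$ is the interval where $\Lambda_{a,b}\le\Omega_{a,b}$. Its ends are the roots of $(1-\sqrt{ab})(a+16\pi^2bs^2)+8\pi\sqrt{ab(1-ab)}\,s=0$, not the boundary of a window where a critical point is admissible. Your planned minimization would find a maximum and reproduce neither $\Lambda_{a,b}$ nor $l_\pm$.

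\emph{The Fourier side.} A shift $t\mapsto t\pm\frac{1}{8}$ sends $s$ to $-1/(16\pi^2 s)$, not to $s+\frac{k}{4}$. So periodicity does not give the Fourier-side bound at the same $s$. The paper's own proof passes over this transfer in one line as well.
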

The techniques used in the proofs of the above results are available in the literature,
 specifically in \cite{Kulikov.et.al-2024, Radchenko-Ramos-2025, Cassano-Fanelli-2017}. 
 To apply these ideas, several additional ingredients are required, which we introduce as
  needed.
\section{The Proofs}
Let $a,b>0$, and $ab<1$
\subsection{Proof of Theorem \ref{T:Asym-evolution}.}
Let $\bold{h}_{\alpha}(x)$, $x\in\R^d$, $\alpha\in\N^d_0$,
denotes the normalized Hermite function of order $\alpha$, 
which is defined as the tensor product of one dimensional 
normalized Hermite functions.
We first note the translation between the harmonic oscillator
and the linear Schr\"odinger equation from \cite{Kulikov.et.al-2024}:
\begin{equation}\label{E:harmonic-to-free}
e^{it\Delta}u_0(x) = (1+16\pi^2t^2)^{-d/4} 
\exp{\[\frac{4\pi^2it}{1+16\pi^2 t^2}\abs{x}^2\]}\cdot
u\(\frac{x}{\sqrt{1+16\pi^2 t^2}}, \frac{\arctan(-4\pi t)}{4\pi}\),
\end{equation}
which was achieved by using the following formula established 
in \cite[Lemma 11]{Goncalves-2019}
\begin{equation*}
e^{it\Delta}(\bold{h}_{\alpha})(x) =
(1+4\pi i t)^{-d/2}\(\frac{1-4\pi i t}{1+4\pi i t}\)^{\abs{\alpha}/2}
\exp{\[\frac{4\pi^2it}{1+16\pi^2 t^2}\abs{x}^2\]}\cdot
 \bold{h}_{\alpha}\(\frac{x}{\sqrt{1+16\pi^2 t^2}}\).
\end{equation*}
Now to analyze the free solutions, we note from
\cite{Cowling.et.al.-2010} that, if $v_1$ is a solution of the problem
\begin{equation*}
\begin{cases}
\begin{aligned}
\frac{1}{i} \frac{\partial v_1(x,t)}{\partial t} 
=&\; -\Delta v_1(x,t)\\
 v_1(x,0) =&\; g(x)
\end{aligned}
,
\end{cases}
\end{equation*}
then the function
\begin{equation}\label{E:v1-v2}
v_2(x,t) = (it)^{-d/2} e^{-\frac{\abs{x}^2}{4it}} \bar{v}_1(x/t, 1/t-1)
\end{equation}
verifies
\begin{equation*}
\begin{cases}
\begin{aligned}
\frac{1}{i} \frac{\partial v_2(x,t)}{\partial t} 
=&\; -\Delta v_2(x,t)\\
 v_2(x,0) =&\; (4\pi)^{-d/2} e^{\frac{\abs{x}^2}{4i}} 
 \bar{\hat{g}}(x/4\pi)\\
 v_2(x,1) =&\; i^{-d/2} e^{-\frac{\abs{x}^2}{4i}} 
 \bar{g}(x)
\end{aligned}
.
\end{cases}
\end{equation*}
Hence, our problem reduces to analyze the functions $v_2(x,t)$.

Let $g(x) = u_0\(\frac{x}{2\sqrt{\pi}}\)$. 
Since, $u_0\in E^d(a,b)$,
therefore, we obtain
$v_2(x,0) \in L^2(e^{\frac{b-\epsilon}{4}\abs{x}^2}dx)$, and
$v_2(x,1) \in L^2(e^{\frac{a-\epsilon}{4}\abs{x}^2}dx)$,
for every $\epsilon>0$.
The key to further analysis is Theorem 3 from 
\cite{Escauriaza.et.al.-2010} by Escauriaza-Kenig-Ponce-Vega; 
we require its full strength in this context.
\begin{lem}\label{L:L2-evolution}
Assume that $y\in C([0,1], L^2(\R^d))$ verifies 
\begin{equation*}
\partial_t y = i \Delta y, \quad \text{in}\,\, \R^d\times [0,1],
\end{equation*}
and $\alpha\beta\ge 4$. then 
\begin{equation*}
\sup_{[0,1]}
\norm{e^{\Sigma_{\alpha,\beta}(t)\abs{x}^2}y(t)}_{L^2(\R^d)}
\lesssim_{\alpha,\beta} 
\norm{e^{\frac{\abs{x}^2}{\beta^2}}y(0)}_{L^2(\R^d)} 
+ \norm{e^{\frac{\abs{x}^2}{\alpha^2}}y(1)}_{L^2(\R^d)},
\end{equation*}
where 
\begin{equation*}
\Sigma_{\alpha,\beta}(t) = \frac{R\alpha\beta}
{2[(\alpha t+\beta(1-t))^2+R^2(\alpha t-\beta(1-t))^2]},
\end{equation*}
$R$ is the smallest root of the equation
\begin{equation*}
\frac{1}{\alpha\beta} = \frac{R}{2(1+R^2)}.
\end{equation*}
\end{lem}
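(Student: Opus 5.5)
The plan is the one used by Escauriaza--Kenig--Ponce--Vega: first reduce to the symmetric case $\alpha=\beta$ with an Appell (pseudo-conformal) transformation, then prove a log-convexity estimate for a Gaussian-weighted $L^2$ norm whose time-dependent weight solves a Riccati-type ODE.

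\textbf{Step 1: reduction to equal endpoint weights.} For a solution $y$ of $\partial_t y=i\Delta y$, set
\begin{equation*}
\tilde y(x,s)=\Big(\tfrac{\sqrt{\alpha\beta}}{\alpha(1-s)+\beta s}\Big)^{d/2}
y\Big(\tfrac{\sqrt{\alpha\beta}\,x}{\alpha(1-s)+\beta s},\tfrac{\beta s}{\alpha(1-s)+\beta s}\Big)
e^{\frac{(\alpha-\beta)\abs{x}^2}{4i(\alpha(1-s)+\beta s)}}.
\end{equation*}
A direct computation should show that $\tilde y$ again solves the free Schr\"odinger equation. The phase factor has modulus one, and the spatial rescaling turns both endpoint weights into the common weight $e^{\abs{x}^2/(\alpha\beta)}$. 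This reduces the problem to proving
\begin{equation*}
\sup_{[0,1]}\norm{e^{a(s)\abs{x}^2}\tilde y(s)}\lesssim \norm{e^{\gamma\abs{x}^2}\tilde y(0)}+\norm{e^{\gamma\abs{x}^2}\tilde y(1)},\qquad \gamma=\frac{1}{\alpha\beta}\le\frac14 ,
\end{equation*}
for a suitable symmetric weight $a(s)$. Undoing the change of variables, with $x\mapsto$ the rescaled variable and $s\mapsto t$, should then give exactly $\Sigma_{\alpha,\beta}(t)$. Matching the formulas is a routine algebraic check.

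\textbf{Step 2: log-convexity in the symmetric case.} Take the candidate weight
\begin{equation*}
a(s)=\frac{R}{2(1+R^2(2s-1)^2)},
\end{equation*}
and set $H(s)=\norm{e^{a(s)\abs{x}^2}\tilde y(s)}^2$. Write $f=e^{a\abs{x}^2}\tilde y$, so that $\partial_s f=\mathcal S f+\mathcal A f$ with $\mathcal S$ symmetric and $\mathcal A$ antisymmetric. The standard identity
\begin{equation*}
\partial_s^2\log H\ \ge\ \frac{\langle(\mathcal S_s+[\mathcal S,\mathcal A])f,f\rangle}{H}
\end{equation*}
should yield $\partial_s^2 \log H\ge -\varepsilon(s)$ with explicit control. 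This holds because $a$ solves the ODE $a''=\dfrac{3a'^2}{2a}+32a^3$, or the equivalent Riccati relation, so that the commutator term is nonnegative up to a harmless error. One then integrates this inequality between the endpoints $0$ and $1$. The boundary conditions $a(0)=a(1)=\gamma$ force $\gamma=\frac{R}{2(1+R^2)}$. The condition $\alpha\beta\ge4$, i.e.\ $\gamma\le\frac14$, is exactly the condition for a real root to exist, and one takes the smallest root so that the weight stays below the threshold where the convexity computation degenerates.

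\textbf{Step 3: justification (main obstacle).} The formal computation in Step 2 presupposes that $H(s)<\infty$ and that the differentiations are legitimate, but a priori we only know $\tilde y\in C([0,1],L^2)$. To handle this, I would:
\begin{itemize}
\item regularize the equation parabolically, $\partial_s y=(\varepsilon+i)\Delta y$;
\item replace $\abs{x}^2$ by bounded convex truncations $\phi_\rho(x)$;
\item first prove a weaker, linear-in-time log-convexity bound giving finiteness of $\norm{e^{c\abs{x}^2}y(s)}$ for some small $c>0$, uniformly in $\varepsilon$ and $\rho$;
\item bootstrap this to the sharp Riccati weight and pass to the limits $\rho\to\infty$, $\varepsilon\to0$.
\end{itemize}
This approximation argument is the delicate part. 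The sharp weight sits at the edge of where the commutator term is positive, so the errors from truncation and regularization must be shown to vanish in the limit rather than merely stay bounded. The algebra in Steps 1--2 is essentially mechanical by comparison.
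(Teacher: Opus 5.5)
The paper does not prove this lemma. It imports it verbatim as Theorem 3 of \cite{Escauriaza.et.al.-2010}, and your outline follows the architecture of that work.

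Step 1 is correct. Your map is the Appell transform used there, and it sends both endpoint weights to $e^{\abs{x}^2/(\alpha\beta)}$. Since $2s-1=\frac{\alpha t-\beta(1-t)}{\alpha t+\beta(1-t)}$ when $t=\frac{\beta s}{\alpha(1-s)+\beta s}$, pulling back $a(s)\abs{x}^2$ gives exactly $\Sigma_{\alpha,\beta}(t)\abs{x}^2$.

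The gap is in Step 2, which carries all of the sharp content. First, your weight satisfies $a''=\frac{3a'^2}{2a}-32a^3$, not $+32a^3$: at $s=\frac12$ one has $a'=0$ and $a''=-4R^3=-32a^3$. With the plus sign $a$ would be convex, hence below $\gamma$.

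More seriously, the commutator term is not nonnegative up to a harmless error. For $f=e^{a\abs{x}^2}\tilde y$ one has $\mathcal S=a'\abs{x}^2-2ia(2x\cdot\nabla+d)$, $\mathcal A=i(\Delta+4a^2\abs{x}^2)$, and
\begin{equation*}
\langle(\mathcal S_s+[\mathcal S,\mathcal A])f,f\rangle=8a\norm{\nabla f}^2+8a'\,\mathrm{Im}\int x\cdot\nabla f\,\bar f\,dx+(a''+32a^3)\norm{xf}^2,
\end{equation*}
where $a''+32a^3=\frac{3a'^2}{2a}$ for your weight. This form is indefinite. For $f=e^{-(\lambda+i\frac{a'}{4a})\abs{x}^2}$ it equals $\bigl(32a\lambda^2-\frac{a'^2}{2a}\bigr)\frac{d}{4\lambda}\norm{f}^2$, which is unbounded below relative to $H$ as $\lambda\to0$ wherever $a'\neq0$. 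So no inequality $\partial_s^2\log H\ge-\varepsilon(s)$ can be extracted from it. Inserting a phase $e^{ib(s)\abs{x}^2}$ into $f$ does not help: for every $b$ the positivity condition comes out as $a''\ge\frac{2a'^2}{a}-32a^3$, which the sharp weight violates by $\frac{a'^2}{2a}$.

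What is missing is the term $\norm{\mathcal Sf}^2\norm{f}^2-\langle\mathcal Sf,f\rangle^2$ that your inequality discards. You need it together with the exact identity produced by the corrected ODE,
\begin{equation*}
\langle(\mathcal S_s+[\mathcal S,\mathcal A])f,f\rangle=\frac{a'}{a}\langle\mathcal Sf,f\rangle+8a\norm{\nabla\bigl(e^{i\frac{a'}{8a}\abs{x}^2}f\bigr)}^2 .
\end{equation*}
Set $D=\langle\mathcal Sf,f\rangle=\frac12H'$, so that $D'=\langle(\mathcal S_s+[\mathcal S,\mathcal A])f,f\rangle+2\norm{\mathcal Sf}^2$. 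The identity then gives $\bigl(\frac{D}{aH}\bigr)'\ge\frac{2}{aH^2}\bigl(\norm{\mathcal Sf}^2H-D^2\bigr)\ge0$. Thus $a^{-1}\partial_s\log H$ is nondecreasing: $\log H$ is convex in $\tau=\int_0^s a$, not in $s$. This yields $H(s)\le H(0)^{1-\theta}H(1)^{\theta}$ with $\theta=\tau(s)/\tau(1)$, which is the bound you want. The gradient term here, with the chirp $e^{ia'\abs{x}^2/(8a)}$, is the one appearing in the full EKPV estimate.

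Relatedly, your reason for taking the smallest root is wrong. The computation above is identical for $R$ and $1/R$, since both weights solve the ODE with endpoint value $\gamma$. Yet the larger one is false. Take the free evolution of $e^{-\lambda\abs{x}^2}$ placed at $s=\frac12$, with $\frac R2<\lambda<\frac1{2R}$. Its decay rate at $s=0,1$ is $\frac{\lambda}{1+4\lambda^2}>\gamma$, so both endpoint norms are finite. But $e^{\frac{1}{2R}\abs{x}^2}\tilde y(\frac12)\notin L^2$.

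So the root is selected by the a priori finiteness of $H$ that the formal argument presupposes, not by any degeneration of the convexity computation. Your Step 3 bootstrap must raise the weight from a constant one, and it can only stop at the smaller root. That is part of Step 3's real content, beyond making the regularization errors vanish.
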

Write $a_{\epsilon} = a - \epsilon$, and 
$b_{\epsilon} = b - \epsilon$. 
We have $ab<1$, therefore, for 
$\alpha = \frac{2}{\sqrt{a_{\epsilon}}}$, and 
$\beta = \frac{2}{\sqrt{b_{\epsilon}}}$, we use above lemma
for $v_2(x,t)$, and obtain that
\begin{equation*}
\norm{e^{A_{a,b,\epsilon}(t)\abs{x}^2}v_2(x,t)}_{L^2(\R^d)}
< +\infty,
\end{equation*}
where 
\begin{equation*}
A_{a,b,\epsilon}(t) = 
\Sigma_{\frac{2}{\sqrt{a_{\epsilon}}}, 
\frac{2}{\sqrt{b_{\epsilon}}}}(t)
= \frac{\sqrt{a_{\epsilon}b_{\epsilon}}R_{a,b,\epsilon}}
{2[(\sqrt{a_{\epsilon}}\, t+\sqrt{b_{\epsilon}}(1-t))^2+
R_{a,b,\epsilon}^2(\sqrt{a_{\epsilon}}\, t-\sqrt{b_{\epsilon}}(1-t))^2]},
\end{equation*}
and
\begin{equation*}
R_{a,b,\epsilon}= \frac{1-\sqrt{1-a_{\epsilon}b_{\epsilon}}}
{\sqrt{a_{\epsilon}b_{\epsilon}}}.
\end{equation*}
By the relation (\ref{E:v1-v2}), we get that
\begin{equation*}
\norm{e^{B_{a,b,\epsilon}(t)\abs{x}^2}v_1(x,t)}_{L^2(\R^d)}
< +\infty,
\end{equation*}
where $B_{a,b,\epsilon}$ is defined by the following relation:
\begin{equation*}
(1+t)^2B_{a,b,\epsilon}(t) = A_{a,b,\epsilon}\(\frac{1}{t+1}\).
\end{equation*} 
Finally, we return to our original functions by the
relation $e^{it\Delta}u_0(x,t)= v_1(2\sqrt{\pi}x, 4\pi t)$, and 
in that case, we acquire 
\begin{equation*}
\norm{e^{4\pi B_{a,b,\epsilon}(4\pi t)\abs{x}^2}
e^{it\Delta}u_0(x,t)}_{L^2(\R^d)}< +\infty,
\end{equation*}
for all $t>0$.
Then, using the translation relation (\ref{E:harmonic-to-free})
of harmonic oscillator and linear Schr\"odinger equation, 
it follows that
\begin{equation}\label{E:L2-e-estimate}
\norm{e^{4\pi (1+16\pi^2t^2)B_{a,b,\epsilon}(4\pi t)\abs{x}^2}
u\(x, \frac{\arctan(-4\pi t)}{4\pi}\)}_{L^2(\R^d)}< +\infty,
\end{equation}
for all $t>0$.

To proceed, we analyze the function
$$\Omega_{a,b,\epsilon}(t) = 4\pi (1+16\pi^2t^2)B_{a,b,\epsilon}(4\pi t).$$
That is, we look at the function
\begin{equation*}
\Omega_{a,b,\epsilon}(t) = 
\frac{2\pi\sqrt{a_{\epsilon}b_{\epsilon}}R_{a,b,\epsilon}(1+16\pi^2t^2)}
{[(\sqrt{a_{\epsilon}}+4\pi\sqrt{b_{\epsilon}}t)^2+
R_{\epsilon}^2(\sqrt{a_{\epsilon}}-4\pi\sqrt{b_{\epsilon}}t)^2]},
\end{equation*}
and show that 
\begin{equation}\label{E:critical-variance}
\min_t \Omega_{a,b,\epsilon}(t) = \Omega_{a,b,\epsilon}
(t_c(a_{\epsilon}, b_{\epsilon})) = 
\omega(a_{\epsilon}, b_{\epsilon}).
\end{equation}
To do so, we write $s(a,b)=a+b$, $p(a,b)=ab$ and 
$r(a,b)=\sqrt{s^2-4p^2}$, and define
\begin{equation*}
P_{a,b}(t) = 16\pi^2 b \(2ap-s+r\)t^2-8\pi\sqrt{p(1-p)}
\(s-r\)t+a\(2pb-s+r\).
\end{equation*}
Observe that 
\begin{equation*}
\(2ap-s+r\)\(2pb-s+r\) = (1-p) \(s-r\)^2.
\end{equation*}
Therefore,
\begin{equation*}
P_{a,b}(t) = \(t-\frac{\sqrt{p(1-p)}\(s-r\)}{4\pi b \(2ap-s+r\)}\)^2
\end{equation*}
A direct calculation shows
\begin{equation*}
b(2ap-s+r)(b-a+r)=2p(1-p)(s-r).
\end{equation*}
Thus, from equation (\ref{E:v-function}) we obtain
\begin{equation*}
P_{a,b}(t) = \(t-t_c\)^2\ge 0.
\end{equation*}
At this point, we see that
\begin{equation*}
\Omega_{a,b,\epsilon}(t) - 
\omega(a_{\epsilon}, b_{\epsilon}) = 
\frac{P_{a_{\epsilon}, b_{\epsilon}}(t)}{2a_{\epsilon}b_{\epsilon}
(a_{\epsilon}+8\pi\sqrt{a_{\epsilon}b_{\epsilon}(1-a_{\epsilon}b_{\epsilon})}t
+16\pi^2b_{\epsilon}t^2)}\ge 0,
\end{equation*}
and
\begin{equation*}
\Omega_{a,b,\epsilon}(t_c) = \omega(a_{\epsilon}, b_{\epsilon}),
\end{equation*}
which together justifies our claim (\ref{E:critical-variance}).

From equation (\ref{E:v-function}) and (\ref{E:critical-variance}), we have
\begin{equation*}
\begin{aligned}
\Omega_{a,b}(t) &=\; \lim_{\epsilon\rightarrow 0}
\Omega_{a,b,\epsilon}(t) \quad \text{and}\\
\Omega_{a,b}(t_c) &=\; \omega(a, b).
\end{aligned}
\end{equation*}
Hence, equation (\ref{E:L2-e-estimate}) implies that
\begin{equation*}
\norm{e^{\theta\abs{x}^2}
u(x, s)}_{L^2(\R^d)}< +\infty,
\end{equation*}
for all $\theta<\Omega_{a,b}\(-\frac{\tan 4\pi s}{4\pi}\)$.
We have that 
$\Omega_{a,b}\(-\frac{\tan 4\pi s}{4\pi}\)>\omega(a, b)$ for all
$s\in (-\frac{\arctan (4\pi t_c(a,b))}{4\pi}, 0)$.
Therefore, there exists $\theta(s)>\omega(a, b)$ such that
\begin{equation}\label{E:L2-g-estimate}
\norm{e^{\theta(s)\abs{x}^2} u(x, s)}_{L^2(\R^d)}< +\infty.
\end{equation}

To extend this estimate to the claimed time sets, we note that 
the $\omega$ is a symmetric function in $a$ and $b$, i.e, 
$\omega(a,b) = \omega(b,a)$, however, $t_c$ is not a symmetric function
for all $a$ and $b$. Also,
if 
$\Phi(x,t)$ is a solution of problem (\ref{E:HSE}) with the initial data
$\mathcal{F}^k u_0$, for $k\in\Z$, then 
\begin{equation*}
\abs{\Phi(x,t)} = \abs{u(x,t-k/8)}.
\end{equation*}
Let us denote 
\begin{equation*}
K_e = \Big\{\frac{\arctan (4\pi t_c(a,b))}{4\pi}
+\frac{k}{8}\st k \in 2\Z \Big\}, \quad \text{and}\quad
K_o = \Big\{\frac{\arctan (4\pi t_c(b,a))}{4\pi}
+\frac{k}{8}\st k \in \Z\setminus2\Z \Big\}.
\end{equation*}
With these, we see two cases:\\
Case I: When $k\in 2\Z$ and $u_0\in E^d(a,b)$. In this case, we see that
$\mathcal{F}^k u_0$ is in $E^d(a,b)$. Then the estimate 
(\ref{E:L2-g-estimate}) holds for all 
$s\in\R\setminus K_e$.\\
Case II: When $k\in \Z\setminus2\Z$ and $u_0\in E^d(a,b)$. 
In this case, we see that $\mathcal{F}^k u_0$ is in $E^d(b,a)$. 
Then the estimate (\ref{E:L2-g-estimate}) holds for all 
$s\in\R\setminus K_o$.\\
Altogether, we have that for all 
$s\in\R\setminus K_e\cup K_o$ 
there exists $\theta(s)>\omega(a,b)$ so that (\ref{E:L2-g-estimate}) holds.

The further analysis to prove the pointwise estimate for $u(x,s)$
proceeds as in \cite{Kulikov.et.al-2024}. We mainly require the following 
result from \cite{Kulikov.et.al-2024}.
\begin{thm}\label{T:Lp-to-Linfty}
If the function $f:\R^d\rightarrow \C$ is such that $e^{a\pi\abs{x}^{\alpha}} 
f\in L^p$ and $e^{a\pi\abs{x}^{\beta}}\hat{f}\in L^q$ for some 
$a,\alpha,\beta>0$ and $p,q\ge 1$, then for all $\epsilon>0$ we have 
$\abs{f(x)}\lesssim e^{-(1-\epsilon)a\pi\abs{x}^{\alpha}}$.
\end{thm}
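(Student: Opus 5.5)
The statement is the Kulikov--Oliveira--Ramos lemma: if $e^{a\pi|x|^\alpha}f\in L^p$ and $e^{a\pi|\xi|^\beta}\hat f\in L^q$, then $|f(x)|\lesssim e^{-(1-\epsilon)a\pi|x|^\alpha}$ for every $\epsilon>0$. The plan is to pass from integral decay to pointwise decay by averaging $f$ over a small ball and controlling the error by a smoothing cutoff on the Fourier side.

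\emph{Step 1: regularity.} Since $e^{a\pi|\xi|^\beta}\hat f\in L^q$ and $e^{-a\pi|\xi|^\beta}\in L^{q'}$, Hölder gives $\hat f\in L^1$. So $f$ is continuous and bounded, and the same argument gives $|\xi|^N\hat f\in L^1$ for every $N$. Hence $f\in C^\infty$, with $\|\nabla f\|_\infty$ finite. The pointwise bound is therefore meaningful.

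\emph{Step 2: localization.} Fix $x$ with $|x|$ large and a radius $r=r(x)\in(0,1]$ to be chosen. Write
\[
f(x)=\frac{1}{|B_r|}\int_{B_r(x)} f(y)\,dy+\frac{1}{|B_r|}\int_{B_r(x)}\big(f(x)-f(y)\big)\,dy .
\]
For the first term, use $|y|\ge |x|-r$ and $(|x|-r)^\alpha\ge |x|^\alpha-C r|x|^{\alpha-1}$. Hölder with the weight $e^{a\pi|y|^\alpha}$ then bounds it by
\[
C r^{-d/p}\, e^{-a\pi(|x|-r)^\alpha}\,\|e^{a\pi|\cdot|^\alpha}f\|_{L^p}.
\]
The second term is crude, of size $\lesssim r\|\nabla f\|_\infty$, and this alone is not small enough.

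\emph{Step 3: the key refinement.} Instead of the plain ball average, convolve $f$ with a mollifier $\phi_r(y)=r^{-d}\phi(y/r)$ and write $f=f*\phi_r+(f-f*\phi_r)$. The term $f*\phi_r$ is handled as in Step 2. For the error term,
\[
|f(x)-f*\phi_r(x)|\le\int|\hat f(\xi)|\,|1-\hat\phi(r\xi)|\,d\xi .
\]
Choose $\phi$ with all moments vanishing, so that $1-\hat\phi$ vanishes to infinite order at $0$; alternatively, take $\hat\phi=1$ on $B_{M}$ with $M=\delta/r$. With the second choice the integral is bounded by the tail of $\hat f$ beyond $|\xi|>M$, which is
\[
\lesssim e^{-(1-\epsilon')a\pi M^\beta}
\]
by Hölder against the $\hat f$ weight. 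The compact support of $\phi$ would be lost, so I take $\phi$ Schwartz with $\hat\phi=1$ near $B_M$. The spatial tails of $\phi_r$ are then controlled by bounding the convolution via the weighted $L^p$ norm; this requires $|\phi(z)|$ to decay faster than $e^{-c|z|}$ for some large $c$, which is achievable with $\hat\phi$ compactly supported only when analytic bounds permit. This tension is the delicate point. A Gevrey-type function $\phi$ will be used, with $\hat\phi$ compactly supported and $\phi(z)\lesssim e^{-|z|^{1-\eta}}$.

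\emph{Step 4: balancing.} Since $|x|\to\infty$, we need $e^{-(1-\epsilon')a\pi (\delta/r)^\beta}\le e^{-a\pi|x|^\alpha}$ together with $r|x|^{\alpha-1}\ll \epsilon|x|^\alpha$. Choosing $r=\delta|x|^{-\alpha/\beta}$ achieves both when $|x|$ is large, up to the $\epsilon$ losses. The polynomial factor $r^{-d/p}$ is absorbed into the $\epsilon$ loss in the exponent.

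The main obstacle is Step 3: we must simultaneously get super-fast spatial decay of the mollifier, so that the weighted $L^p$ bound transfers to $f*\phi_r$, and an exact cutoff in frequency. I would resolve this with a Gevrey mollifier, or alternatively bound $f*\phi_r(x)$ by splitting $\phi_r$ into its part near $0$ and its tail, using boundedness of $f$ on the tail.
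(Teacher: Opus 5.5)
\emph{Note: the paper gives no proof of this statement.} It is quoted from \cite{Kulikov.et.al-2024}, so your argument has to stand on its own. As written it does not close.

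\textbf{Where it fails.} The gap is the main term $f*\phi_r(x)$ in Steps 3--4. Once $\hat\phi$ is compactly supported, $\phi_r$ is no longer supported in $B_r$, so this term is not ``handled as in Step 2''. You must split the convolution at a second radius $\rho\le\epsilon_1\abs{x}$ and bound the far part $\int_{\abs{z}>\rho}\abs{f(x-z)}\abs{\phi_r(z)}\,dz$. Whether you use H\"older against the weight or $\norm{f}_\infty$, the region $z\approx x$ matters: there $x-z$ is near the origin and $f$ need not be small. That region forces $r^{-d}\abs{\phi(x/r)}\lesssim e^{-(1-\epsilon)a\pi\abs{x}^\alpha}$.

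With your choice $r=\delta\abs{x}^{-\alpha/\beta}$ one has $\abs{x}/r=\abs{x}^{1+\alpha/\beta}/\delta$. So the requirement becomes $\abs{\phi(t)}\lesssim e^{-c\abs{t}^{\alpha\beta/(\alpha+\beta)}}$. For $\alpha=\beta=2$, the only case used in the paper, this is exponential decay. No nonzero integrable $\phi$ with compactly supported $\hat\phi$ can decay exponentially: on a line where it does not vanish it is a bounded function of exponential type, so $\log\abs{\phi}$ is integrable against $dt/(1+t^2)$. Concretely, your Gevrey mollifier bounds the far part only by $e^{-c\abs{x}^{2-2\eta}}$, and the $\norm{f}_\infty$ variant gives the same. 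So the balancing in Step 4 works only when $\frac1\alpha+\frac1\beta>1$, and the ``delicate point'' you flag is not resolved.

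\textbf{How to repair it.} Decouple the two scales and take $r$ far smaller; then no Gevrey function is needed. Take any Schwartz $\phi$ with $\hat\phi\in C_c^\infty$ and $\hat\phi=1$ on $B_1$. Set $\rho=\epsilon_1\abs{x}$ and $r=e^{-\gamma\abs{x}^\alpha}$. The three pieces are then controlled as follows.
\begin{equation*}
\abs{f(x)-f*\phi_r(x)}\lesssim\int_{\abs{\xi}>1/r}\abs{\hat f(\xi)}\,d\xi\le r^N\int_{\R^d}\abs{\xi}^N\abs{\hat f(\xi)}\,d\xi .
\end{equation*}
The far part is at most $\norm{f}_\infty\int_{\abs{w}>\rho/r}\abs{\phi(w)}\,dw\lesssim_N(\rho/r)^{-N}$. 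The near part is $\lesssim r^{-d/p}e^{-a\pi(1-\epsilon_1)^\alpha\abs{x}^\alpha}$, with $r^{-d/p}=e^{\gamma d\abs{x}^\alpha/p}$. Choose $\epsilon_1,\gamma$ small, then $N\ge a\pi/\gamma$, and the claim follows.

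An equivalent route: take $\phi\in C_c^\infty(B_1)$ with $\int\phi=1$ and vanishing moments of orders $1,\dots,N-1$. Taylor's formula gives $\abs{f(x)-f*\phi_r(x)}\lesssim r^N\norm{\nabla^N f}_\infty$, and your Step 2 then applies verbatim.

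Either way, the hypothesis on $\hat f$ enters only through finitely many moments, i.e.\ through the smoothness of $f$. The $\epsilon$-loss is exactly the factor $r^{-d/p}$. A polynomially small $r=\abs{x}^{-K}$ with $K$ large would also work with your Gevrey mollifier, but then the exponential decay of $\hat f$ is genuinely used.
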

In fact, recently, in \cite{Saucedo_Tikhonov-2024}, 
Saucedo-Tikhonov proved that the $\epsilon$-loss
in this result is merely of polynomial order. 

In our situation, for $\Theta(s) = \min\{\theta(s),\theta(s-1/8)\}$, we have
\begin{equation*}
e^{\Theta(s)\pi \abs{x}^2} u(x,s), 
e^{\Theta(s)\pi \abs{\xi}^2} \hat{u}(\xi,s) \in L^2,
\end{equation*}
because $s-1/8\notin K_e\cup K_o$ 
when 
$s\notin K_e\cup K_o$,
and 
$\hat{u}(x,s) = u(x,s-1/8)$.
Therefore, we obtain from Theorem \ref{T:Lp-to-Linfty}, that 
\begin{equation*}
u(x,s) \in E^d\(\Theta(s)-\epsilon,\Theta(s)-\epsilon\).
\end{equation*} 
Finally, by making $\epsilon>0$ arbitrary small, we conclude
\begin{equation*}
u(x,s) \in E^d\(\omega(a,b),\omega(a,b)\),
\end{equation*} 
when 
$s\notin K_e\cup K_o$. 
\qed

\subsection{Proof of Theorem \ref{T:1d-evol}.}
We first show that the claimed estimates are sharp in the sense that 
there exists a function which saturates the estimates. To do so,
we need to define a few parameters. 
The equation (\ref{E:criteria}) allows us to define
\begin{equation*}
A(a,b) = \sqrt{\frac{a+b-2ab}{a+b+2ab}}.
\end{equation*}
Observe that 
\begin{equation*}
(a+b-2ab)(a+b+2ab)>(a-b)^2.
\end{equation*}
Therefore, there exists a unique parameter $\tau\in (-\frac{1}{4}, \frac{1}{4})$
such that
\begin{equation*}
\sin2\pi\tau = \frac{a-b}{\sqrt{(a+b-2ab)(a+b+2ab)}}.
\end{equation*}
Choose a branch $\sqrt{}$ of the square root that is defined on the right
half plane and is positive on the positive real line.  Define
$$
u(x,t) =
\frac{e^{\pi i(2t+\pi/8)}}{\sqrt{1-iAe^{4\pi i(2t-\tau/2)}}}
\exp\[-\pi \(\frac{1+iAe^{4\pi i(2t-\tau/2)}}{1-iAe^{4\pi i(2t-\tau/2)}}\) \abs{x}^2\].
$$
Then $u(x,t)$ is a solution of (\ref{E:HSE}).
Observe that
\begin{equation*}
\Re\(\frac{1+iAe^{-2\pi i\tau}}{1-iAe^{-2\pi i\tau}}\) = a 
\quad \text{and} \quad
\Re\(\frac{1-iAe^{-2\pi i\tau}}{1+iAe^{-2\pi i\tau}}\) = b.
\end{equation*}
Also,
\begin{equation}\label{E:A-def}
\frac{1-A}{1+A} 
= \omega(a,b).
\end{equation}
Therefore
\begin{equation*}
\begin{aligned}
\abs{u(x,0)}
=   & \; C_0 g_a(x), \\
\abs{\widehat{u}(x,0)} = \abs{u\(x,-\frac{1}{8}\)}
=   & \; C_1 g_b(x), \quad{\text{but}}\\
\abs{u\(x,\frac{1}{16}+\frac{\tau}{4}\)}
=   & \; C_2 g_{\omega(a,b)}(x),
\end{aligned}
\end{equation*}
for some $C_0$, $C_1$, and $C_2$ which possibly depends on $a$, and $b$.

We now proceed to prove point-wise Gaussian bound for $u(x,t)$.
To do so, write
$$
u(x,0) = \sum_{k=0}^\infty \<u(\cdot,0), h_k\> h_k(\sqrt{2\pi}x),
$$ 
then
$$
u(x,t) = \sum_{k=0}^\infty e^{(2k+1)\pi it} \<u(\cdot,0), h_k\> h_k(\sqrt{2\pi}x).
$$ 
Assume $u(x,0) \in E^1(a,b)$. The estimate 
(\ref{E:Chaurasia-estimate}) will be crucial for us, which also provides us that 
$$
\<u(\cdot,0), h_k\> \lesssim A^{k/2}.
$$

If we use Cauchy-Schwartz inequality and the Mehler's formula,
as used in \cite{Folland-Sitaram} and \cite[Theorem 3.1]{Vemuri2008hermite},
instead of Theorem \ref{T:Radchenko-Ramos}, then here as well 
we obtain an $\epsilon$-loss in comparison with the desired estimates.
To see that, let $B\in (A,1)$. Then 
\begin{equation*}
\begin{aligned}
\abs{u(x,t)}
\le & \; \(\sum_{k=0}^\infty B^{-k} \abs{\<u(\cdot,0), h_k\>}^2\)^{1/2}
         \(\sum_{k=0}^\infty B^k \abs{h_k(\sqrt{2\pi}x)}^2\)^{1/2} \\
\le & \; C(A,B) e^{-\pi\frac{1-B}{1+B} x^2}.
\end{aligned}
\end{equation*}
Also,
$$
\abs{\widehat{u}(x,t)}
= \abs{u(x, t - 1/8))}
\le  C(A,B) e^{-\pi\frac{1-B}{1+B} x^2}.
$$
Thus $u(x,t)\in E\(\frac{1-B}{1+B},\frac{1-B}{1+B}\)$.

In order to get the sharp estimates, we use the full strength of estimate
(\ref{E:Chaurasia-estimate}) and Theorem \ref{T:Radchenko-Ramos}. 
That is, we have
\begin{equation*}
\begin{aligned}
\abs{u(x,t)} \lesssim&\;
\sum_{k=0}^\infty  \abs{\<u(\cdot,0), h_k\>} \abs{h_k(\sqrt{2\pi}x)}\\
\lesssim&\; \sum_{k=0}^\infty  k^{-1/4} A^{k/2} \abs{h_k(\sqrt{2\pi}x)}\\
=&\; \sum_{k=0}^\infty  k^{-1/4} e^{-k\frac{1}{2}\log\frac{1}{A}}
 \abs{h_k(\sqrt{2\pi}x)}.
\end{aligned}
\end{equation*}
Applying Theorem \ref{T:Radchenko-Ramos} for $\kappa=1$, $\beta=1/4$, and
$y=\tanh\(\frac{1}{2}\log \frac{1}{A}\)$ gives us that
\begin{equation*}
\abs{u(x,t)} \lesssim_{a,b} e^{-\pi \tanh\(\frac{1}{2}\log \frac{1}{A}\) x^2}.
\end{equation*}
Observe that
\begin{equation*}
\tanh\(\frac{1}{2}\log \frac{1}{A}\) = \frac{1-A}{1+A} = \omega(a,b).
\end{equation*}
Therefore, we conclude that
\begin{equation*}
\abs{u(x,t)} \lesssim_{a,b} e^{-\pi \omega(a,b) x^2}.
\end{equation*}
Also, by the relation $\abs{\widehat{u}(x,t)}
= \abs{u(x, t - 1/8))}$, we get the same Gaussian bound for 
$\widehat{u}(x,t)$ as well. This finishes the proof.

\qed

Now, we first point out that it is natural to expect a better time-dependent
Gaussian bounds for $u(x,t)$ and $\widehat{u}(\xi,t)$ 
when $u(x,0)\in E^1(a, b)$, for $a, b>0$, and $ab<1$. 
For $\mu$ to be a finite measure supported on the positive real line, let 
$\phi(x)=\mathcal{L}\mu(\pi\abs{x}^2)$, where 
$$
\mathcal{L}\mu(s) = \int_0^{\infty} e^{-st} d\mu(t).
$$ 
We show that 
\begin{equation}\label{E:time-evol}
\Phi(x,t) = e^{-itH}\phi(x)\in E^1(\eta(t),\eta(t)),
\end{equation}
for some $\eta(t)\ge \min\{a,b\}\ge \omega(a,b)$, when $\phi\in E^1(a,b)$
for $a, b>0$, and $ab<1$.
We proceed with the following lemma.
\begin{lem}\label{L:sigma-less-a}
Let $a, b>0$, and $ab<1$. Let $\omega(a,b)$ be as defined in equation 
(\ref{E:v-function}). Then
$$
\omega(a,b)\le \min\{a,b\}.
$$
\end{lem}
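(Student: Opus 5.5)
Writing $S=\frac1a+\frac1b>2$ (by \eqref{E:criteria}), we have $\omega(a,b)=\frac12\bigl(S-\sqrt{S^2-4}\bigr)$. So $\omega$ is the smaller root of
$$q(x)=x^2-Sx+1.$$
The product of the two roots is $1$, so the roots are $\omega$ and $1/\omega$, with $0<\omega<1<1/\omega$. Moreover $q>0$ on $(0,\omega)$ and $q<0$ on $(\omega,1/\omega)$. The plan is therefore to locate $m=\min\{a,b\}$ relative to these roots by evaluating $q(m)$. By symmetry of $\omega$ in $a,b$, we may assume $a\le b$, so $m=a$.

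Next we compute
$$q(a)=a^2-a\Bigl(\frac1a+\frac1b\Bigr)+1=a^2-\frac ab=\frac ab\,(ab-1).$$
Since $ab<1$, this gives $q(a)<0$. Hence $a$ lies strictly between the roots, and in particular $\omega(a,b)<a=\min\{a,b\}$.

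The only point needing care is that $a$ lies in the interval between the roots rather than outside it. This is exactly what $q(a)<0$ asserts, since $q$ is negative only between its roots. Alternatively, one can check directly that $a<1/\omega$: from $ab<1$ and $a\le b$ we get $a^2\le ab<1$, so $a<1<1/\omega$. Either way the inequality follows, in fact strictly.

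There is no real obstacle here. As a sanity check, a direct route also works: $\omega\le a$ is equivalent to $S-2a\le\sqrt{S^2-4}$. The left side is positive because $\frac1a>a$. Squaring reduces the claim to $a^2-aS+1\le0$, which is the same computation as above.
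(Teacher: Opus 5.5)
Your proof is correct and is essentially the paper's argument. The paper sets $J(x)=x+\frac1x$, notes that $J(\omega(a,b))=\frac1a+\frac1b\ge J(a)$ for $a\le b$, and concludes from the strict monotonicity of $J$ on $(0,1)$; this is the same as your sign check $q(a)=a\bigl(J(a)-S\bigr)<0$, and your quadratic phrasing has the minor advantage of making $a<1/\omega$ automatic (the paper leaves the needed fact $a<1$ implicit) while also giving strict inequality.
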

\begin{proof}
Define $J:(0,1]\rightarrow \R$ by $J(x) = x+\frac{1}{x}$.
It is direct to see that the function $J$ is strictly decreasing on $(0,1)$.
We note from equation $(\ref{E:criteria})$ that $\omega(a,b)\le1$.
Therefore, equation (\ref{E:v-function}) gives us that 
\begin{equation*}
J(\omega(a,b)) = \frac{1}{a}+\frac{1}{b}.
\end{equation*}
Let $a\le b$. Then, due to the fact that $ab<1$, we see that 
\begin{equation*}
J(\omega(a,b)) \ge a+\frac{1}{a}= J(a).
\end{equation*}
Taking into account that $J$ is a strictly decreasing function, we finish the proof.
\end{proof}
For $\beta\in\R$, define the {\it Fractional Fourier transform} 
of $\phi$ of order $\beta$ as defined in \cite[equation 4.2]{Kulikov.et.al-2024}
\begin{equation*}
\mathcal{F}_{\beta} \phi(x) = 
\frac{e^{i(\theta(\beta)\pi/2-\beta/2)}}{\sqrt{\abs{\sin(\beta)}}}
e^{i\pi x^2\cot(\beta)}
\int_{\R} e^{-2\pi i(xy\csc(\beta)-y^2\cot(\beta)/2)}\phi(y) dy,
\end{equation*}
where $\theta(\beta)= \mathrm{sgn}(\sin(\beta))$. Then 
\begin{equation*}
\Phi(x,t) = e^{2\pi i t} \mathcal{F}_{-4\pi t}\phi (x).
\end{equation*}
Set $m=\min\{a,b\}$. Then, $\phi\in E^1(a,b)$ implies 
$\phi\in E^1(m,m)$. With this, we see that 
the arguments given in \cite{Kulikov.et.al-2024}
gives us
\begin{equation*}
\supp\(\mu\) \subset \[m, \frac{1}{m}\].
\end{equation*}
Observe that, for $\lambda>0$
\begin{equation}\label{E:Fractional}
\abs{\mathcal{F}_{\beta}g_{\lambda}(x)} = 
C_{\lambda, \beta}\, e^{-\pi\frac{\lambda\csc^2(\beta)}{\lambda^2+\cot^2(\beta)}
\abs{x}^2},
\end{equation}
for some constant $C_{\lambda, \beta}$, which depends on $\lambda$, and $\beta$.
Thus, it follows that
\begin{equation*}
\begin{aligned}
\abs{\Phi(x,t)} \lesssim_{t}&\; \int_m^{1/m} 
e^{-\pi \frac{\lambda \csc^2(4\pi t)}{\lambda^2+\cot^2(4\pi t)} \abs{x}^2}
\abs{d\mu}(\lambda)\\
\lesssim_{t,\mu}&\;  \max\Big\{
e^{-\pi \frac{m \csc^2(4\pi t)}{m^2+\cot^2(4\pi t)} \abs{x}^2},
e^{-\pi \frac{m \csc^2(4\pi t)}{1+m\cot^2(4\pi t)} \abs{x}^2}\Big\}.
\end{aligned}
\end{equation*}
Since
$$
\min\Big\{\frac{m \csc^2(4\pi t)}{m^2+\cot^2(4\pi t)}, 
\frac{m \csc^2(4\pi t)}{1+m\cot^2(4\pi t)}\Big\} \ge \min\{a,b\},
$$
with equality if and only if $t\in \frac{1}{8}\Z$.
Therefore from Lemma \ref{L:sigma-less-a}, there exists $\eta(t)$
with $\eta(t)\ge \min\{a,b\}\ge \omega(a,b)$ such that (\ref{E:time-evol})
holds.

The preceding analysis and the proof of Theorem \ref{T:Asym-evolution}
 suggest the validity of the
 following conjecture, which may be viewed as a slight generalization of 
 \cite[Conjecture 3.4]{Kulikov.et.al-2024}.
\begin{conj}
Let $a,b>0$, and $ab<1$. 
Let $\Omega_{a,b}$ be as in equation \eqref{E:omega-lambda}.
 Let $u(x,t)$ be the solution of the Cauchy problem 
(\ref{E:HSE}) with the initial data $u_0$.
If $u_0\in E^1(a,b)$, then
$$
u(x,t) \in E^1\(\Omega_{a,b}(s),\Omega_{a,b}(s)\),
$$
for $s=-\frac{\tan4\pi t}{4\pi}$.
\end{conj}
This conjecture is known to hold for a certain class of functions; see
 \cite{Kulikov.et.al-2024}. 
 We partially resolve the above conjecture for the class of 
 functions described in Theorem \ref{T:time-bound}. 
 The proof of this theorem occupies the next subsection.

\subsection{Proof of Theorem \ref{T:time-bound}.}
For $r>0$, denote $\mathbb{S}^1_{r}$ as the circle of radius $r$ in $\C$.
The Laplace transform of a finite measure $\nu$ supported on $\mathbb{S}^1_{r}$
can be defined in the following way:
$$
\mathcal{L}\nu (t) =
\int_{\mathbb{S}^1_{r}} e^{-tz} d\nu(z).
$$
We proceed with the following Lemma.
\begin{lem}\label{L:supp-lem}
Let $r>0$. Let $\nu_r$ be a finite measure supported on $\mathbb{S}^1_{r}$
and let $-r\notin \supp(\nu_r)$. 
If $\abs{\mathcal{L}\nu_r (t)} \lesssim e^{-ct}$ for some $c>0$,
then 
\begin{equation*}
\supp(\nu_r) \subset \mathbb{S}^1_{r} \cap 
\{z \st \Re{(z)}\ge c\}.
\end{equation*}
\end{lem}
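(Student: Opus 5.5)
The plan is to convert the decay of $\mathcal{L}\nu_r$ along the positive real axis into analytic continuation of the Cauchy transform of $\nu_r$ across the part of the circle lying in $\{\Re z<c\}$. A Cauchy transform that continues analytically across an arc forces the measure to vanish on that arc.

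\emph{Step 1 (Cauchy transforms).} Set
$$C_{\nu}(\zeta)=\int_{\mathbb{S}^1_r}\frac{d\nu_r(z)}{z-\zeta},\qquad \zeta\notin\supp(\nu_r).$$
This is analytic on $\C\setminus\supp(\nu_r)$. Let $C_{out}$ denote its restriction to $\{\abs{\zeta}>r\}$ and $C_{in}$ its restriction to $\{\abs{\zeta}<r\}$. Since $-r\notin\supp(\nu_r)$, there is an open arc $I_0\ni -r$ of $\mathbb{S}^1_r$ carrying no mass. Across $I_0$ the function $C_\nu$ is analytic, so $C_{in}$ and $C_{out}$ are restrictions of one analytic function on the connected open set $\{\abs{\zeta}\neq r\}\cup I_0$.

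\emph{Step 2 (Laplace representation in a half-plane).} For $\Re\zeta<-r$ we have $\Re(z-\zeta)>0$ for every $z\in\mathbb{S}^1_r$. Fubini then gives
$$\int_0^\infty e^{t\zeta}\,\mathcal{L}\nu_r(t)\,dt=\int_{\mathbb{S}^1_r}\int_0^\infty e^{-t(z-\zeta)}\,dt\,d\nu_r(z)=C_{out}(\zeta).$$
The hypothesis $\abs{\mathcal{L}\nu_r(t)}\lesssim e^{-ct}$ for $t>0$ (here $\mathcal{L}\nu_r$ is bounded on compact sets, being entire) shows that the left-hand side converges and defines an analytic function $H$ on the half-plane $\{\Re\zeta<c\}$. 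By the identity theorem, $H=C_\nu$ on the connected component of $\{\Re\zeta<c\}\cap(\{\abs{\zeta}\neq r\}\cup I_0)$ containing $\{\Re\zeta<-r\}$.

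This component contains all of $\{\Re\zeta<c,\ \abs{\zeta}>r\}$, which is connected. It also contains $\{\Re\zeta<c,\ \abs{\zeta}<r\}$, which is entered through the gap $I_0$. (If $c\le -r$ the claim is trivial as stated, up to the degenerate case where the circle misses the half-plane.) Hence $H=C_{out}$ on the exterior part and $H=C_{in}$ on the interior part of $\{\Re \zeta< c\}$.

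\emph{Step 3 (vanishing on the arc).} Let $I=\mathbb{S}^1_r\cap\{\Re z<c\}$, an open arc. Because $H$ is analytic across $I$, the boundary values of $C_{in}$ and $C_{out}$ from the two sides of $I$ coincide in the strongest sense. By the Sokhotski--Plemelj jump formula, applied in its distributional form to measures, the difference $C_{in}(\rho\zeta)-C_{out}(\zeta/\rho)$ as $\rho\uparrow1$ converges weak-$*$ on $I$ to a nonzero constant multiple of $\nu_r$, essentially $2\pi i\,\nu_r$ restricted to $I$. Concretely, testing against $\psi\in C_c(I)$, this difference is the Poisson-kernel smoothing of $\nu_r$. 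Since the difference tends to $0$ locally uniformly on $I$, we get $\nu_r|_I=0$. Therefore $\supp(\nu_r)\subset\mathbb{S}^1_r\cap\{\Re z\ge c\}$.

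\emph{Main obstacle.} I expect the main difficulty to be Step 3: making the jump relation precise for an arbitrary finite measure, rather than one with a continuous density. I would handle it by the explicit identity that for $\zeta=re^{i\phi}$ and $0<\rho<1$, the difference $C_\nu(\rho\zeta)-C_\nu(\zeta/\rho)$ equals, up to a factor of the form $c_\rho/\zeta$ with $c_\rho$ bounded, the Poisson integral of the measure $\bar z\,d\nu_r(z)$. Poisson integrals converge weak-$*$ to the measure. The topological bookkeeping in Step 2 is where the hypothesis $-r\notin\supp(\nu_r)$ is used; without the gap $I_0$, the interior Cauchy transform could not be identified with $H$.
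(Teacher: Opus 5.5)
Your argument is correct, and it takes a genuinely different route from the paper's. The paper does not prove the statement directly. It pushes $\nu_r$ forward under $z\mapsto z/r$ to a measure $\nu_1$ on the unit circle and notes $\mathcal{L}\nu_r(t)=\mathcal{L}\nu_1(rt)$, so that $\abs{\mathcal{L}\nu_1(t)}\lesssim e^{-ct/r}$ and $-1\notin\supp(\nu_1)$. It then quotes the unit-circle case from Lemma 4.2 of Kulikov--Oliveira--Ramos and scales back.

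You instead prove the statement from scratch at arbitrary radius, in four steps:
\begin{itemize}
\item the Borel--Laplace identity $\int_0^\infty e^{t\zeta}\mathcal{L}\nu_r(t)\,dt=\int\frac{d\nu_r(z)}{z-\zeta}$ on $\{\Re\zeta<-r\}$;
\item continuation of the left side to $\{\Re\zeta<c\}$ using the decay;
\item identification with both the exterior and the interior Cauchy transform by passing through the gap at $-r$, which is usable precisely because $\Re(-r)=-r<0<c$;
\item a jump argument on the arc $\{\Re z<c\}$.
\end{itemize}
The topology checks out: the exterior piece $\{\Re\zeta<c,\ \abs{\zeta}>r\}$ is connected and the interior piece is convex. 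The jump step is airtight once written out.

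As for what each route buys: the paper's is two lines, but it rests entirely on the cited lemma and does not show why the gap hypothesis is needed. Yours is self-contained and never uses positivity of $\nu_r$. It also makes the role of $-r\notin\supp(\nu_r)$ transparent: without the gap, the interior transform cannot be matched to the continuation. Indeed, $d\nu(z)=z\,d\theta$ on $\mathbb{S}^1_1$ has $\mathcal{L}\nu\equiv0$ by the mean value property, while its support is the whole circle.

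Two small corrections.
\begin{itemize}
\item The exact jump identity is, with $\zeta=re^{i\phi}$, $z=re^{i\theta}$ and $P_\rho(\psi)=\frac{1-\rho^2}{1-2\rho\cos\psi+\rho^2}$,
\begin{equation*}
C_\nu(\rho\zeta)-C_\nu(\zeta/\rho)=\int_{\mathbb{S}^1_r}z^{-1}P_\rho(\phi-\theta)\,d\nu_r(z).
\end{equation*}
This is the constant $r^{-2}$ times the Poisson integral of $\bar z\,d\nu_r$, not a factor $c_\rho/\zeta$. The slip is harmless: testing against $\psi\in C_c(I)$ gives the limit $2\pi\int\psi\,z^{-1}\,d\nu_r$, and $z^{-1}$ does not vanish.
\item Your aside about $c\le -r$ is moot because $c>0$. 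The degenerate case that actually occurs is $c>r$, where your argument correctly forces $\nu_r=0$.
\end{itemize}
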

\begin{proof}
We note that
\begin{equation*}
\begin{aligned}
\mathcal{L}\nu_r (t) = &\;
\int_{\mathbb{S}^1_{r}} e^{-tz} d\nu_1\(\frac{z}{r}\)\\
=&\;
\int_{\mathbb{S}^1_{1}} e^{-rtz} d\nu_1(z)\\
=&\;
\mathcal{L}\nu_1 (rt).
\end{aligned}
\end{equation*}
Therefore, we have 
$$
\abs{\mathcal{L}\nu_1 (t)} \lesssim e^{-\frac{c}{r}t}.
$$
Also, $-1\notin \supp{(\nu_1)}$. Thus, from \cite[Lemma 4.2]{Kulikov.et.al-2024},
we obtain
\begin{equation*}
\supp(\nu_1) \subset \mathbb{S}^1_{1} \cap 
\Big\{z \st \Re{(z)}\ge \frac{c}{r}\Big\}.
\end{equation*}
Hence, we have the desired result. 
\end{proof}
Now, set $A_1 = 
\Big\{{\sqrt{\frac{a}{b}}} e^{ i\theta}\subset
\mathbb{S}^1_{\sqrt{\frac{a}{b}}}\st \theta\in [0,\frac{\pi}{2})\Big\}$
and 
$A_2 = 
\Big\{{\sqrt{\frac{b}{a}}} e^{ i\theta}\subset
\mathbb{S}^1_{\sqrt{\frac{b}{a}}}\st \theta\in (-\frac{\pi}{2}, 0]\Big\}$.
Let $\mu$ be a measure supported on $L_{a,b}$.
Define a map $\Gamma : L_{a,b}\rightarrow A_1$ by
\begin{equation*}
\Gamma(v,w) = \sqrt{\frac{a}{b}}\, e^{i\arccos{\sqrt{vw}}}.
\end{equation*}
This map allows us to define the push-forward measure $\nu = \Gamma_*(\mu)$ 
of $\mu$ on the circle $\mathbb{S}^1_{\sqrt{\frac{a}{b}}}$.
With this, we can rewrite $u_0$ as 
\begin{equation}\label{E:u-L}
u_0(x) = \int_{\mathbb{S}^1_{\sqrt{\frac{a}{b}}}} e^{-z\pi\abs{x}^2} d\nu(z)
= \mathcal{L}\nu (\pi \abs{x}^2).
\end{equation}
Also, we see from the given expression of $u_0$ that
\begin{equation*}
\widehat{u_0}(\xi) = C_{a,b}\int_{L_{a,b}} \bar{G_{w,v}(\xi)} d\mu(v,w),
\end{equation*}
for some constant $C_{a,b}$ which depends on $a$ and $b$. 
As done above, we use the map $\widetilde{\Gamma} : L_{a,b}
\rightarrow A_2$ defined by $\widetilde{\Gamma}= \frac{b}{a} \bar{\Gamma}$ to 
write
\begin{equation*}
\widehat{u_0}(\xi) = 
\int_{\mathbb{S}^1_{\sqrt{\frac{a}{b}}}} e^{-\frac{b^2}{a^2}
z\pi\abs{\xi}^2} d\nu(\bar{z}).
\end{equation*}
Thus
\begin{equation}\label{E:F-u-L}
\widehat{u_0}(\frac{a}{b}\xi) = \mathcal{L}\nu (\pi \abs{\xi}^2).
\end{equation}
Since, $u_0\in E^1(a,b)$, therefore, 
from equation (\ref{E:u-L}) and (\ref{E:F-u-L}), we obtain
\begin{equation*}
\abs{\mathcal{L}\nu(t)} \lesssim_{a,b} e^{-at}.
\end{equation*}
Hence, from Lemma \ref{L:supp-lem}, we get 
$\supp(\nu) \subset \mathbb{S}^1_{\sqrt{\frac{a}{b}}} \cap 
\{z \st \Re{(z)}\ge a\}$. Then 
\begin{equation}\label{L:mu-supp}
\supp(\mu) \subset \Big\{(v,w)\in L_{a,b} \st  a\le v \le \sqrt{\frac{a}{b}}\Big\}.
\end{equation}
A direct computation using equation (\ref{E:Fractional}) provides us
\begin{equation*}
\abs{\mathcal{F}_{-4\pi t} G_{w,v} (\xi)} = C_{t,v,w} 
e^{-\pi\Omega_{v,w}\(-\frac{\tan4\pi t}{4\pi}\)\abs{\xi}^2},
\end{equation*}
where $C_{t,v,w}$ is some constant, which depends on $t$, $v$, and $w$.
By using this formula, the relation between the solutions of the problem (\ref{E:HSE}) 
and the fractional Fourier transform, and the given integral representation of 
$u_0$, we obtain 
\begin{equation*}
\begin{aligned}
\abs{\widehat{u}(\xi,t)} =&\;  \int_{\supp(\mu)} 
e^{-\pi\Omega_{v,w}\(-\frac{\tan4\pi t}{4\pi}\)\abs{\xi}^2} C_{t,v,w}\,d\mu(v,w)\\
\lesssim_{t,a,b,\mu}&\; \max\Big\{
e^{-\pi\Omega_{a,b}\(-\frac{\tan4\pi t}{4\pi}\)\abs{\xi}^2},
e^{-\pi\Lambda_{a,b}\(-\frac{\tan4\pi t}{4\pi}\)\abs{\xi}^2}\Big\}.
\end{aligned}
\end{equation*}
We get the same bound for $u(x,t)$ by using the relation 
$\abs{u(x,t)} = \abs{\widehat{u}(x,t+1/8)}$. 
Observe that
\begin{equation*}
\Lambda_{a,b} (t) \le \Omega_{a,b}(t) \quad t\in [l_-(a,b), l_+(a,b)].
\end{equation*}
With this observation and the estimates for $u(x,t)$ and
$\widehat{u}(\xi,t)$ above, we conclude the proof. 
\qed
\section*{Conflict of Interest}
The authors declare no conflicts of interest.

\section*{Funding}
This research received no external funding.

\bibliographystyle{amsplain}
\bibliography{v0-GDHLO}

\end{document}